\pgfplotsset{compat=newest, ticks=none}
\numberwithin{equation}{section}
\newtheorem{theorem}{Theorem}[section]
\newtheorem{lemma}[theorem]{Lemma}
\newtheorem{proposition}[theorem]{Proposition}
\newcommand{\bpr}{\begin{proof}\hspace{3pt}}
\newcommand{\epr}{\end{proof}}
\newcommand{\lb}{\left(}
\newcommand{\rb}{\right)}
\renewcommand{\Re}{\operatorname{Re}}
\def\R{{\mathbb{R}}}
\title{Unique continuation for a non bi-Laplacian fourth order elliptic operator}
\author[1]{A.~Ghosh \thanks{ghosh@math.cas.cz}}
\affil[1]{Institute of Mathematics, CAS, Czech Republic.}
\author[1]{T.~Ghosh \thanks{iasghosh@ust.hk}}
\affil[2]{Institute for Advanced Study, The Hong Kong University of Science and Technology, Hong Kong.}
\date{}
\begin{document}

\maketitle
\begin{abstract}
This paper discusses the unique continuation principal of the solutions of the  following perturbed fourth order elliptic differential operator $\mathcal{L}_{A,q}u=0$, where
\[
\mathcal{L}_{A,q}(x,D)\ =\ \sum_{j=1}^nD^4_{x_j} + \sum_{j=1}^n A_jD_{x_j} + q, \qquad (A, q) \in  W^{1,\infty}(\Omega,\mathbb{C}^n) \times L^{\infty}(\Omega,\mathbb{C}) \]
whose principal term is not given by some integer power of the Laplacian operator. We derive some suitable Carleman estimates which is the main tool to prove the unique continuation principle. As a by-product, we also deduce some stability estimate and prove the strong unique continuation principle in $2$-dimension. 
\end{abstract}

\section{Introduction}
\label{S1}
\setcounter{equation}{0}
%\noindent
Let $\Omega\subset\mathbb{R}^n$, $n\geq 2$ be a bounded connected open set. Let us consider the following fourth order operator
\begin{equation}
\label{operator}
\mathcal{L}_{A,q}(x,D)= \sum_{j=1}^nD^4_{x_j} + \sum_{j=1}^n A_jD_{x_j} + q
\end{equation}
where $A=(A_j)_j \in W^{1,\infty}(\Omega,\mathbb{C}^{n}), q\in L^{\infty}(\Omega,\mathbb{C})$ and $ D = \frac{1}{i}\nabla$. Throughout the paper we assume this regularity on $A$ and $q$. The operator $\mathcal{L}_{A,q}(x,D)$ is a positive definite elliptic operator with the principal part $\sum_{j=1}^n D^4_{x_j}$ which is self-adjoint on $L^2(\Omega)$. 
The purpose of this article is to discuss the unique continuation principle (UCP) of the solutions $u$ of such elliptic fourth order partial differential operator $\mathcal{L}_{A,q}u=0$. Ideally, this principle asserts that any solution of an elliptic equation that vanishes in a small ball, must be identically zero on the whole domain. It can also be interpreted as, given two regions $\Omega_1\subset\Omega_2$, a solution $u$ to $\mathcal{L}_{A,q}u = 0$ is uniquely determined on the larger set $\Omega_2$ by its values on the smaller set $\Omega_1$. The earliest such result for real analytic  coefficients is known as \textit{Holmgren's uniqueness theorem}, see \cite{JohnF}. The scalar second order case is well understood, we mention here the seminal articles \cite{CAR,AKS}, and the expository text \cite{KHTD} and reference therein as well.
In general, the corresponding theory for elliptic equations of order greater than two is much less discussed. Qualitatively, the case of higher order operators is different from the second order operators. We cite \cite{alinhac} in this regard and will get back it in more details at the end of this discussion. Higher order elliptic equations are common in the study of continuum mechanics, in the related field of elasticity, and application in engineering design as well, see \cite{Campos,PBook}. We mention the works  \cite{ARV,LCL,Protter,LeBP} where the UCP for some integer ($\geq 2$) power of Laplacian operator has been discussed. Here in \cite{koch}, we find the discussion of the unique continuation of the product of elliptic operators.  In comparison to the classical bi-Laplacian operator $(-\Delta)^2 = (\sum_{j=1}^nD^2_{x_j})^2$ say, the principal part of our operator $\sum_{j=1}^nD^4_{x_j}\, (\neq (-\Delta)^2)$ does not involve the mixed derivative terms $D^4_{x_ix_j}$, $i\neq j$. Thus, our operator can not be viewed as a higher order iteration of some second order elliptic operator. Moreover, in general it can not be written as the product of two elliptic operators, except in 2-dimension. This encourages us to make a fresh study of the UCP for this operator $\mathcal{L}_{A,q}(x,D)$. UCP results are often regarded as a tool to solve certain problems in solvability
of the related adjoint problem, inverse problems and control theory, see for instance \cite{TataruD,CFZC,LRLG}. Earlier, the second author has considered this operator to study the inverse boundary value problem of recovering the coefficients $A,q$ from the associated boundary Cauchy data, see \cite{GT}. Similar inverse boundary value problems for perturbed bi-harmonic and poly-harmonic operator has been discussed in \cite{KRU1,KRU2,VT,BG}. 

Now we announce the results obtained in this work. We prove quite a few theorems. Our first set of results consists of the so-called weak UCP (WUCP) and the UCP for the local Cauchy data.% Later, we announce the three ball inequality, stability estimate, doubling inequality and the strong UCP results what we accomplished here.   

\begin{theorem}[(WUCP)]
\label{T1}
Let $u\in H^4(\Omega)$ satisfies
\[ \mathcal{L}_{A,q}\,u =0 \quad \mbox{ in } \ \Omega.\]
Also let $\omega\subset\Omega$ be a non-empty open subset such that 
\[ u  = 0 \quad \mbox{ in } \ \omega, \]
then $u = 0$ in $\Omega$.
\end{theorem}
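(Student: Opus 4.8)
The plan is to deduce Theorem~\ref{T1} from a Carleman estimate for $\mathcal{L}_{A,q}$ by the standard propagation‑of‑smallness argument. Concretely, I will rely on an estimate of the shape
\[
\sum_{|\alpha|\le 3}\tau^{\,2(4-|\alpha|)-1}\big\|e^{\tau\varphi}D^{\alpha}v\big\|_{L^2(\R^n)}^2\ \le\ C\,\big\|e^{\tau\varphi}\,\mathcal{L}_{A,q}v\big\|_{L^2(\R^n)}^2,\qquad v\in C^{\infty}_c(B),\ \tau\ge\tau_0,
\]
valid on a small ball $B$, where $\varphi$ is a weight strongly pseudoconvex with respect to the principal part $\sum_j D^4_{x_j}$ — for instance a convexified radial weight $\varphi(x)=e^{-\lambda|x-y|^2}$ with $\lambda$ large, whose gradient does not vanish on $B$. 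This is exactly what the Carleman estimates of the next section provide. The point to stress is that, since the perturbation $\sum_j A_jD_{x_j}+q$ has order $\le 1$ — far below the order $4$ of the principal part — it is absorbed into the left‑hand side for $\tau$ large, using only $A\in L^{\infty}$ and $q\in L^{\infty}$; there is no Alinhac–Baouendi type obstruction here because the coefficients sit only in the lowest‑order terms. Hence it suffices to have the Carleman estimate for $\sum_j D^4_{x_j}$, which is the genuinely new ingredient: this operator is \emph{not} a power of $-\Delta$, so the estimate cannot be obtained by iterating a second‑order Carleman estimate.

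Granting this, the argument is as follows. Let $\mathcal{O}$ be the union of all open subsets of $\Omega$ on which $u$ vanishes; it is open and contains $\omega$, hence non‑empty, so by connectedness of $\Omega$ it suffices to show $\mathcal{O}$ is closed in $\Omega$. If not, there is $p_0\in(\partial\mathcal{O})\cap\Omega$; choosing $y\in\mathcal{O}$ with $|y-p_0|$ small and setting $\rho=\mathrm{dist}(y,\partial\mathcal{O})$, one gets $B(y,\rho)\subset\mathcal{O}$ and a point $p\in\partial B(y,\rho)\cap\partial\mathcal{O}$ which (being close to $p_0$) admits a ball $B(p,\delta_0)\Subset\Omega$. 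Translating $y$ to the origin: $u\equiv 0$ on $B(0,\rho)$, $\mathcal{L}_{A,q}u=0$ in $B(p,\delta_0)$, and $p\in\partial B(0,\rho)\cap\partial\mathcal{O}$. Take the weight $\varphi(x)=e^{-\lambda|x|^2}$, and perform the standard convexity modification near $p$ (e.g.\ replace the exponent by $\lambda|x|^2+N|x-p|^2$, $N$ large) so that the super‑level set $\{\varphi\ge\varphi(p)\}\setminus B(0,\rho)$ is confined to an arbitrarily small ball about $p$; then $\{\varphi>\varphi(p)\}\cap B(p,\delta_0)\subset B(0,\rho)\subset\mathcal{O}$.

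Now pick $\chi\in C^{\infty}_c(B(p,\delta_0))$ with $\chi\equiv 1$ near $p$ and put $v=\chi u$. Since $u\in H^4(\Omega)$ and $\mathrm{supp}\,v\Subset\Omega$, a mollification/density argument lets us apply the Carleman estimate to $v$; and since $\mathcal{L}_{A,q}u=0$ in $B(p,\delta_0)$, we have $\mathcal{L}_{A,q}v=[\mathcal{L}_{A,q},\chi]u$, an expression of order $\le 3$ in $u$ supported in $\mathrm{supp}\,\nabla\chi$. Because $u\equiv 0$ on $B(0,\rho)$, in fact $\mathrm{supp}(\mathcal{L}_{A,q}v)\subset(\mathrm{supp}\,\nabla\chi)\setminus B(0,\rho)$, on which $\varphi\le\varphi(p)-\delta$ for some $\delta>0$ by the choice of the weight; meanwhile on the neighbourhood $\Theta:=\{\varphi>\varphi(p)-\delta/2\}\cap B(p,\delta_0)$ of $p$ one has $\varphi>\varphi(p)-\delta/2$ and $v=u$. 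Using the $|\alpha|=0$ part of the estimate,
\[
\tau^{7}\big\|e^{\tau\varphi}v\big\|_{L^2}^2\ \le\ C\,\big\|e^{\tau\varphi}\,\mathcal{L}_{A,q}v\big\|_{L^2}^2,
\]
these bounds give $\tau^{7}e^{2\tau(\varphi(p)-\delta/2)}\|u\|_{L^2(\Theta)}^2\le C\,e^{2\tau(\varphi(p)-\delta)}\|\mathcal{L}_{A,q}v\|_{L^2}^2$, hence $\|u\|_{L^2(\Theta)}^2\le C'\tau^{-7}e^{-\tau\delta}\to 0$ as $\tau\to\infty$. Thus $u\equiv 0$ on $\Theta$, so $p\in\mathcal{O}$, contradicting $p\in\partial\mathcal{O}$. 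Therefore $\mathcal{O}$ is closed, so $\mathcal{O}=\Omega$, i.e.\ $u\equiv 0$ in $\Omega$.

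I expect the main obstacle to be the Carleman estimate for the principal part $\sum_j D^4_{x_j}$ itself: conjugation by $e^{\tau\varphi}$ produces the symbol $\sum_j(\xi_j+i\tau\partial_j\varphi)^4$, and one must verify the sub‑ellipticity/pseudoconvexity condition (positivity, in Hörmander's sense, of the Poisson bracket of its real and imaginary parts), which is less transparent than for $(-\Delta)^2$ because $\sum_j\xi_j^4$ is anisotropic and its complex characteristic variety is not a sphere. Selecting a weight for which this holds uniformly, and then book‑keeping precisely which powers of $\tau$ one needs in order to swallow the first‑order perturbation, is where the real work lies; the connectedness and cutoff‑commutator steps above are routine once the estimate is in hand.
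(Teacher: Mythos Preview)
Your strategy is correct and in essence the same as the paper's: a Carleman estimate for $\mathcal{L}_{A,q}$, absorption of the first–order perturbation (which is indeed harmless since it sits three orders below the principal part), followed by a cutoff/commutator argument and a connectedness step. The paper organises this a bit differently: it first proves UCP across an arbitrary $C^\infty$ hypersurface (Proposition~\ref{T}), using the quadratic weight $\varphi=\pm x_n+|x'|^2\mp c^2$ after locally flattening the surface; from this it deduces WUCP for concentric balls (Proposition~\ref{P3}) by applying the hypersurface result at every point of a sphere; only then does it run your open--closed argument on the set where $u$ vanishes. You instead collapse these intermediate steps into a single argument at a boundary point $p$ of the zero set, with a radial weight centred at the centre of a ball where $u$ already vanishes. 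One caution: the paper's Carleman estimate (Proposition~\ref{P2}) is established for weights satisfying the structural condition~\eqref{new5}, and that condition is checked there only for $x_n$ and for $\pm x_n+|x'|^2\mp c^2$, not for a Gaussian $e^{-\lambda|x|^2}$ or your modified exponent $\lambda|x|^2+N|x-p|^2$. So ``the Carleman estimates of the next section'' do not cover your $\varphi$ as written; you would need either to verify~\eqref{new5} for your weight or, closer to the paper, to flatten $\partial B(0,\rho)$ near $p$ and use the paper's quadratic weight. The paper's modular route has the advantage that a single weight handles all hypersurfaces after flattening; your direct route is shorter once the right weight is in hand.
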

%\noindent
As an application of the above result, we deduce the UCP for local Cauchy data.

\begin{theorem}[(UCP for local Cauchy data)]
\label{T2}
Let $\Omega\subset\mathbb{R}^n$ have
smooth boundary, and let $\Gamma$ be a non-empty open subset of $\partial\Omega$. If $u\in H^4(\Omega)$ satisfies
\begin{equation*}
\begin{aligned}
\mathcal{L}_{A,q}\,u &=0 \quad \mbox{ in } \ \Omega,\\
\Big( u, \partial_\nu u\Big)_{\Gamma\times\Gamma} &= \Big( \partial^2_\nu u, \partial^3_\nu u\Big)_{\Gamma\times\Gamma}  = 0,
\end{aligned}
\end{equation*}
then $u = 0$ in $\Omega$.
\end{theorem}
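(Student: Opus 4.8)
The plan is to deduce Theorem~\ref{T2} from Theorem~\ref{T1} by extending the solution by zero across the piece $\Gamma$ of the boundary. Fix $x_0\in\Gamma$ and a ball $B=B(x_0,r)$ with $r$ small enough that $B\cap\partial\Omega\subset\Gamma$ and, using the smoothness of $\partial\Omega$, that $\Sigma:=B\cap\partial\Omega$ is a smooth hypersurface with $B\cap\Omega$ lying on one side of it while $B\setminus\overline\Omega$ is a non-empty open set. Put $\widetilde\Omega=\Omega\cup B$; this is again bounded, open and connected. Extend the coefficients: pick $\widetilde A\in W^{1,\infty}(\widetilde\Omega,\mathbb C^n)$ restricting to $A$ on $\Omega$ (a Lipschitz/Sobolev extension, available since $\partial\Omega$ is smooth) and let $\widetilde q\in L^\infty(\widetilde\Omega,\mathbb C)$ be the extension of $q$ by $0$. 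Finally set $\widetilde u=u$ on $\Omega$ and $\widetilde u=0$ on $\widetilde\Omega\setminus\Omega$.

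The crux is to check that $\widetilde u\in H^4(\widetilde\Omega)$ and $\mathcal L_{\widetilde A,\widetilde q}\widetilde u=0$ in $\widetilde\Omega$. The function $\widetilde u$ is $H^4$ on each side of $\Sigma$, and the hypotheses say exactly that its Cauchy data $(u,\partial_\nu u,\partial_\nu^2 u,\partial_\nu^3 u)$ from the $\Omega$-side vanish on $\Gamma\supset\Sigma$, matching the identically vanishing Cauchy data from the exterior side; the standard gluing lemma for Sobolev functions then gives $\widetilde u\in H^4(\widetilde\Omega)$. Consequently, when the fourth order operator $\mathcal L_{\widetilde A,\widetilde q}$ is applied to $\widetilde u$ in the sense of distributions on $\widetilde\Omega$, no term supported on $\Sigma$ is produced (such terms would be measured by jumps in the traces of orders $0,1,2,3$, all of which vanish), so $\mathcal L_{\widetilde A,\widetilde q}\widetilde u$ equals the extension by $0$ of $\mathcal L_{A,q}u=0$ on $\Omega$, i.e. it vanishes identically on $\widetilde\Omega$.

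Granting this, the theorem follows at once: $\widetilde u\in H^4(\widetilde\Omega)$ solves $\mathcal L_{\widetilde A,\widetilde q}\widetilde u=0$ on the bounded connected open set $\widetilde\Omega$ and vanishes on the non-empty open subset $\omega:=B\setminus\overline\Omega$, so Theorem~\ref{T1} forces $\widetilde u\equiv0$ on $\widetilde\Omega$, and restriction to $\Omega$ yields $u\equiv0$ there.

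The main obstacle is the gluing/extension step, and it is the only point where the full strength of the hypothesis enters. One must verify carefully that extending by zero an $H^4$ function whose normal traces of orders $0,1,2,3$ all vanish on $\Sigma$ yields a function genuinely in $H^4$ of the enlarged domain and that applying the fourth order operator $\mathcal L_{\widetilde A,\widetilde q}$ creates no distribution supported on $\Sigma$; it is precisely because $\mathcal L_{A,q}$ has order four that all four Cauchy data are needed, since prescribing fewer would leave a jump in a third (or lower) order derivative and hence a nonzero singular term along $\Sigma$. The remaining ingredients — existence of the coefficient extensions and connectedness of $\widetilde\Omega$ — are routine.
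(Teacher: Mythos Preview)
Your proof is correct and follows essentially the same strategy as the paper: extend the domain across $\Gamma$, extend $u$ by zero (using the vanishing Cauchy data of orders $0$ through $3$ to ensure the extension lies in $H^4$), extend the coefficients, and invoke Theorem~\ref{T1}. The only cosmetic difference is that the paper builds $\widetilde\Omega$ by perturbing the boundary graph with a compactly supported bump so that $\widetilde\Omega$ again has smooth boundary, whereas you simply take $\widetilde\Omega=\Omega\cup B$; since Theorem~\ref{T1} requires only that the domain be bounded, connected and open, your simpler construction is perfectly adequate.
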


There are various approaches to obtain UCP for elliptic equations, at least for the second order elliptic equations. In general such methods consist of either Carleman type estimates (\cite{Hor3,Hor4,KRS,WTH,KHTD}) or, Almgren's frequency function method (\cite{GSMN,GNLF,ARRV}).
In this paper, we rely on developing a class of Carleman estimates as our main tool and apply it in certain ways to establish the weak UCP and stability estimate. Here we mention few expository notes \cite{Lerner,SALON,TDN} which turns out to be very useful to carry out our work. 
  
We would like to emphasize here few essential contrast between our leading operator $\sum_{j=1}^n D^4_{x_j} $ and the bi-Laplacian operator $(-\Delta)^2 $. Let $\varrho\in\mathbb{R}^n$ be a non-zero vector; Then we prove that the following Carleman estimate (cf. Proposition \ref{P2})
\begin{equation}
\label{carF}
\|e^\frac{\varrho\cdot x}{h}h^4\sum_{j=1}^nD_{x_j}^4(e^{-\frac{\varrho\cdot x}{h}}w)\|_{L^2(\Omega)} \gtrsim h\left(\|w\|^2_{L^2(\Omega)} + \|h\nabla w\|^2_{L^2(\Omega)}\right)^{\frac{1}{2}} \gtrsim h\|w\|_{L^2(\Omega)} 
\end{equation}
holds for all $w\in C_c^{\infty}(\Omega)$ and $0<h<1$ small enough.  However, if the principal part is a bi-Laplacian $(-\Delta)^2$ operator, then we could have 
the following Carleman estimate \cite{KRU1}: 
\begin{equation}
\label{carB}
\|e^\frac{\varrho\cdot x}{h}(-h^2\Delta)^2(e^{-\frac{\varrho\cdot x}{h}}w)\|_{L^2(\Omega)} \gtrsim h\|e^\frac{\varrho\cdot x}{h}(-h^2\Delta)(e^{-\frac{\varrho\cdot x}{h}}w)\|_{L^2(\Omega)} \gtrsim h^2\|w\|_{L^2(\Omega)} 
\end{equation}
which holds for all $w\in C_c^{\infty}(\Omega)$ and $0<h<1$ small enough.
Notice that \eqref{carF} offers better lower-estimate compared to \eqref{carB} as $0<h<1$ which is due to the structure of the principal part of the respective operators (as proof indicates in Section \ref{S2}).

Also we would like to emphasize that though the Carleman estimate (\ref{carF}) is an interior estimate, the estimate up to the boundary can be derived from it (cf. proof of Theorem \ref{T4}) using the lift of the trace operator. Furthermore, a different type of boundary Carleman estimate has been proved in \cite[Theorem 3.1]{GT}.
Here is our next result.
%Now we mention  results consisting of the stability estimate and the strong UCP, which are derived on the basis of having Carleman estimate.

For any smooth function $\varphi$, let us define
\begin{equation*}
\Omega_\delta := \Omega \cap \{\varphi >\delta\} \quad \text{ and } \quad \partial\Omega_\delta := \partial\Omega\cap \{\varphi >\delta\}.
\end{equation*}

\begin{theorem}[(Stability estimate)]
\label{T4}
Let $\varphi$ be any function which satisfies the \eqref{new5}, and $\partial\Omega_0\subset \Gamma $ where $\Gamma\subseteq \partial\Omega$. 
%Let $\Gamma = \partial B_{r_1}$ and $\Omega = B_{r_2}\setminus \overline{B}_{r_1}$. 
Suppose that $u\in H^4(\Omega)$ solve the Cauchy problem
\begin{equation*}
\begin{cases}
\mathcal{L}_{A,q}\,u = f \quad \text{ in } \ \Omega,\\
\partial^k_{\nu} u = g^k \quad \ \ \text{ on } \ \Gamma, \quad k=0,...,3,
\end{cases}
\end{equation*}
with $f\in L^2(\Omega)$ and $g^k\in H^{\frac{7}{2}-k}(\Gamma)$. Then there exists constant $C>0$, depending on $\delta, \Omega, \Gamma, \|A\|_{W^{1,\infty}(\Omega)}$, $\|q\|_{L^\infty(\Omega)}$, $n$ only and $\theta\in (0,1)$, depending on $\delta$, such that we have,
\begin{equation}
\label{SE}
\|u\|_{H^1(\Omega_\delta)} \le C \left( F+ F^\theta M^{1-\theta}\right) 
\end{equation}
where
\begin{equation*}
F := \|f\|_{L^2(\Omega_0)} +\sum_{k=0}^{3}\left(\|g^k\|_{H^{\frac{7}{2}-k}(\Gamma)} \right), \qquad M:= \|u\|_{H^1(\Omega_0)}.
\end{equation*}
\end{theorem}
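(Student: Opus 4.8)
The plan is to derive the stability estimate \eqref{SE} from the interior Carleman estimate \eqref{carF} by a standard three-region (Hadamard) interpolation argument, after first reducing the boundary-value data to the interior setting. First I would reduce to the homogeneous Cauchy data case: using the lift of the trace operator, I would construct $G \in H^4(\Omega)$ with $\partial_\nu^k G = g^k$ on $\Gamma$ and $\|G\|_{H^4(\Omega)} \lesssim \sum_k \|g^k\|_{H^{7/2-k}(\Gamma)}$, so that $v := u - G$ satisfies $\mathcal L_{A,q} v = \tilde f$ in $\Omega$ with $\partial_\nu^k v = 0$ on $\Gamma$ and $\|\tilde f\|_{L^2} \lesssim F$. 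Then I would extend $v$ by zero across $\Gamma$ (legitimate since all four normal traces vanish, giving $v \in H^4$ of the extended domain) and, after a suitable cutoff, apply the Carleman estimate to $w = \chi v$ for an appropriate linear weight $\varrho \cdot x$ compatible with the level sets of $\varphi$.

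The core step is the interpolation. I would choose a cutoff $\chi$ that is $1$ on $\Omega_\delta$ and supported in $\Omega_{\delta'}$ for some $0 < \delta' < \delta$, so that the commutator $[\mathcal L_{A,q}, \chi]$ is supported in the transition region $\{\delta' < \varphi < \delta\}$. Plugging $w = \chi v$ into \eqref{carF}, the left side is controlled by $e^{\varrho\cdot x/h}$ times $(\mathcal L_{A,q}v$-terms$)$ plus lower order terms from $A, q$ (absorbed for $h$ small) plus the commutator terms supported in the transition region. On the transition region, $\varphi \le \delta$ so the weight $e^{\varrho \cdot x /h}$ is bounded above by $e^{c/h}$ for suitable $c$ tied to $\delta$; on $\Omega_\delta$ the weight is bounded below by $e^{c'/h}$ with $c' > c$. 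This yields, schematically, an inequality of the form
\begin{equation*}
e^{c'/h}\|v\|_{H^1(\Omega_\delta)} \lesssim e^{C_0/h} F + e^{c/h} \|v\|_{H^1(\Omega_0)},
\end{equation*}
where I must carefully track that the $H^1$-norm on the left really is produced (the right side of \eqref{carF} controls $\|w\|_{L^2} + \|h\nabla w\|_{L^2}$, which after dividing by $h$ and restricting gives the $H^1$ bound on $\Omega_\delta$ up to $h$-powers that are harmless). Rearranging gives $\|v\|_{H^1(\Omega_\delta)} \lesssim e^{-(c'-c)/h}\big(M + e^{C_1/h}F\big)$ for all small $h$, and then optimizing over $h \in (0,h_0)$ — balancing $e^{-(c'-c)/h}M$ against $e^{(C_1-c'+c)/h}F$ or just taking $h$ to make the two error terms comparable — produces the Hölder-type bound $F + F^\theta M^{1-\theta}$ with $\theta = (c'-c)/(C_1)$ or similar, explicitly depending on $\delta$. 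Finally I would pass back from $v$ to $u = v + G$, absorbing $\|G\|_{H^1(\Omega_\delta)} \lesssim F$ into the $F$ term.

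The main obstacle I anticipate is twofold. First, the Carleman estimate \eqref{carF} as stated is only an $L^2$-to-$L^2$ estimate with a gradient term on the right; to run the interpolation at the $H^1(\Omega_\delta)$ level cleanly and to absorb the first-order perturbation $\sum_j A_j D_{x_j} u$ and handle the commutator $[\mathcal L_{A,q},\chi]$ (which contains up to third-order derivatives of $v$), I will need the estimate in a slightly stronger form controlling higher Sobolev norms of $w$ on the left — so I expect to first upgrade \eqref{carF}, or equivalently \eqref{carF} applied to difference quotients / derivatives of $w$, to a version bounding $\|w\|_{L^2}^2 + \|h\nabla w\|_{L^2}^2 + \dots + \|h^3 \nabla^3 w\|_{L^2}^2$ (this should follow from elliptic regularity of the conjugated operator, as hinted in Section \ref{S2}). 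Second, the geometry: I need the linear Carleman weight $\varrho\cdot x$ to be "aligned" with the given $\varphi$ on the relevant region so that the level-set separation argument works — this requires the convexity/pseudoconvexity condition \eqref{new5} on $\varphi$, and locally one replaces $\varphi$ by an affine function with the correct gradient, then patches via a covering argument. Carrying this covering/patching argument carefully, while keeping uniform control of all constants in terms of $\delta, \Omega, \Gamma, \|A\|_{W^{1,\infty}}, \|q\|_{L^\infty}, n$, is the delicate bookkeeping part of the proof.
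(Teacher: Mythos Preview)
Your overall architecture --- trace lift to homogenize the boundary data, cutoff, Carleman estimate, then exponential interpolation in $h$ --- matches the paper's proof. Two points, however, diverge from what the paper actually does, and the second is a genuine gap.

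First, you plan to work only with the linear-weight estimate \eqref{carF} and then align $\varrho\cdot x$ with $\varphi$ by a covering/patching argument. This detour is unnecessary: Proposition~\ref{P2} already furnishes the Carleman estimate \eqref{c-e} for \emph{any} $\varphi$ satisfying \eqref{new5}, so the paper applies it directly with the given $\varphi$ and a single cutoff $\eta$ equal to $1$ on $\Omega_{\delta/2}$ and vanishing outside $\Omega_0$. No localization or patching is needed, and the exponential factors come straight from the level sets of $\varphi$.

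Second --- and this is the real issue --- your plan for the commutator $[\mathcal L_{A,q},\chi]v$ does not close. You propose to upgrade the Carleman estimate so that its lower bound controls $\sum_{k\le 3}\|h^k\nabla^k w\|_{L^2}$. But $w=\chi v$, and on the transition strip (where $\nabla\chi\neq 0$) control of $D^k w$ does \emph{not} yield control of $D^k v$; the commutator lives precisely on that strip and involves $D^2 v$ and $D^3 v$. Upgrading the left-hand side of the Carleman inequality therefore buys nothing for this term. The paper handles it with a separate ingredient: the Caccioppoli-type interior estimate \eqref{caccioppoli}, proved at the start of Section~\ref{S4}, which bounds
\[
\int_{\Omega_0\setminus\overline{\Omega}_{\delta/2}}\bigl(|D^2 u^*|^2+|D^3 u^*|^2\bigr)
\ \lesssim\
\int_{\text{larger region}}\bigl(|u^*|^2+|Du^*|^2\bigr).
\]
This is exactly what allows the final estimate to feature $M=\|u\|_{H^1(\Omega_0)}$ rather than a higher-order Sobolev norm of $u$. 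Your instinct that some ``elliptic regularity'' is required is correct, but it must be invoked as an interior Caccioppoli estimate on $v$ itself, not as an upgrade of the Carleman bound on $w=\chi v$.
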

   
Apart from the Carleman estimate, the proof of this above result relies on the use of some Caccioppoli-type interior estimate as well. For instance, denoting by $B_{r}$ a ball of radius $r$, centered at $0$, we show that, if $\sum_{j=1}^nD_{x_j}^4u =0$ in $B_1$, then for fixed $r,\rho\in (0,1)$ with $r>\rho$:
\begin{equation}
\label{cC}
\int\displaylimits_{B_r\setminus\overline{B}_{\varrho}}{(|D^2u|^2+|D^3u|^2)}\lesssim \frac{1}{(r-\varrho)^2}\int\displaylimits_{B_{2r}\setminus\overline{B}_{\frac{\varrho}{2}}}{(|u|^2+|Du|^2)}.
\end{equation}
Note that, even to bound the second order term $D^2u$ only, we need $H^1$-norm of $u$ on the right hand side, i.e.
\begin{equation*}
\int\displaylimits_{B_r\setminus\overline{B}_{\varrho}}|D^2u|^2\lesssim \frac{1}{(r-\varrho)^2}\int\displaylimits_{B_{2r}\setminus\overline{B}_{\frac{\varrho}{2}}}{(|u|^2+|Du|^2)}.
\end{equation*}
However, in the case of $\widetilde{u}$ solving $(-\Delta)^2\widetilde{u}=0$ in $B_1$, it is possible to bound $D^2\widetilde{u}$ by the $L^2$-norm of $\widetilde{u}$ only (see \cite{BAMS}):
\begin{equation*}
\int\displaylimits_{B_r\setminus\overline{B}_{\varrho}}|D^2\widetilde{u}|^2\lesssim \frac{1}{(r-\varrho)^2}\int\displaylimits_{B_{2r}\setminus\overline{B}_{\frac{\varrho}{2}}}|\widetilde{u}|^2.
\end{equation*}
Thus, the Caccioppoli estimate \eqref{cC} suggests to consider the $H^1$-norm as the natural candidate instead of the $L^2$-norm for the above theorem.

Next we talk about the strong unique continuation principle (SUCP). If a solution $u$ of the equation $\mathcal{L}_{A,q}u =0$ in $\Omega$ vanishes to infinite order at $x_0\in\Omega$ in the sense that
\begin{equation*}
\lim_{r\to 0}\frac{1}{r^m}\left( \int\displaylimits_{B(x_0,r)}{(|u|^2+ |\nabla u|^2)}\right)^{1/2} =0 \quad \text{ for all } \ m\ge 0,
\end{equation*}
then we say the SUCP holds for this operator if $u=0$ in $\Omega$ is the only solution. 

Concerning the SUCP, we have a very interesting observation to announce. We find that this property is dimension dependent. In three and higher dimensions, it does not hold. However in two dimension, due to elliptic factorization of our operator it holds. 
We begin with recalling a result by \cite{alinhac} which asserts that:\\
 
\noindent
In $\R^n$, $n\ge 2$, let $P=P(x,y,t,D_x,D_y,D_t)$, $t\in \R^{n-2}$ be a differential operator of order $m$, $m\ge 2$ with principal symbol $p_m(x,y,t,\xi,\eta,\tau)$ and $M$ be a sub-manifold of co-dimension $2$. If the principle part $p_m(0,0,0,1,\eta, 0)$ has two roots which are non-real and non-conjugate, then there exists a neighbourhood $V$ of $0$ and two functions $a, u\in C^\infty(V)$ which vanishes of all order on $M\cap V$ and satisfies $Pu-au=0$ in $V$.\\

\noindent
Since in $\R^3$ (or $n\geq 3$), the sub-manifold of co-dimension $2$ is given by lines, the above property precisely corresponds to the vanishing of infinite order at $0$. Our operator $\sum_{i=1}^3 D_{x_i}^4$ satisfies all the hypothesis of the above theorem, since $1+\eta^4 =0$ has two roots which are non-real and non-conjugate, which concludes that the operator $\sum D_{x_i}^4 -aI$ does not have the strong unique continuation property. It is a strike difference with the general second order elliptic operators and the bi-harmonic operator for which SUCP is always true. 

On the other hand, if we consider the $2$-dimension case, the above result no longer applies. Now as the principal part of our operator can be written as a product of elliptic operators of second order
\begin{equation*}
D_1^4 + D_2^4 = (D_1^2+D_2^2 - \sqrt{2} D_1 D_2) (D_1^2+D_2^2 + \sqrt{2} D_1 D_2),
\end{equation*}
the result of \cite{koch} ensures the strong unique continuation principle in this situation.

Finally, we briefly describe the plan of the rest of the paper. In Section \ref{S2}, we derive the Carleman estimates and as an immediate application we show the UCP across hyperplane and hypersurface. In Section \ref{S3}, we prove the weak UCP (Theorem \ref{T1}) and the UCP for local Cauchy data (Theorem \ref{T2}). In the final Section \ref{S4}, as an application of the Carleman estimates derived in Section \ref{S2}, we prove the stability estimate (Theorem \ref{T4}). 

\section{Carleman estimate}
\label{S2}
\setcounter{equation}{0}
%\noindent
This section is dedicated to build Carleman estimates. 
Let us introduce some standard notations which is used through out the paper. 
 Let $u,\vartheta\in L^2(\Omega)$. We write
\[(u\, |\, \vartheta)\ =\ \int\displaylimits_{\Omega}u\,\overline{\vartheta}\ dx, \quad \|u\|_{L^2}\ =\ (u\, |\, u)^{1/2}.\]
We say that the estimate
\[F_1(u;h)\ \lesssim\ F_2(u;h)\]
holds for all $u$ belonging to some function space and for
$0<h<1$ small enough, if there exists constant $C > 0$, independent of $h$ but depends on $\Omega, A, q$ and $n$, such that 
the inequality $F_1 (u; h) \leq C F_2(u;h)$ is satisfied. We follow the convention that $C$ is an unspecified positive constant which may vary among inequalities, but not across equalities. Generally $C$ depends on various parameters which is specified when necessary. 
We first announce the following Carleman estimate with the linear weight.
\begin{lemma}[(Carleman inequality with linear weight)]
\label{L1}
Let $\Omega = \{x=(x',x_n)\in \R^n: a<x_n <b\}$ for some $a, b\in\mathbb{R}$.
Then the Carleman estimate
\begin{equation}
\label{carleman}
h\|w\|_{L^2(\Omega)} \lesssim \ \|e^{\frac{x_n}{h}}h^4\mathcal{L}_{A,q}(e^{-\frac{x_n}{h}}w)\|_{L^2(\Omega)} 
\end{equation}
holds for all $w\in C_c^{\infty}(\Omega)$ and $0<h<1$ small enough.  
\end{lemma}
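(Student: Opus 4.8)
The plan is to reduce the Carleman estimate for the full operator $\mathcal{L}_{A,q}$ to a Carleman estimate for its principal part $P_0 := \sum_{j=1}^n D^4_{x_j}$, and then prove the latter by conjugation and a calculus with the resulting semiclassical symbol. Write $P_\varrho(h) := e^{\frac{x_n}{h}}h^4 P_0 (e^{-\frac{x_n}{h}}\,\cdot\,)$, which is a semiclassical differential operator. Since $D_{x_j}$ commutes with multiplication by $e^{\pm x_n/h}$ for $j\neq n$, the conjugation only affects the $x_n$-derivative: replacing $hD_{x_n}$ by $hD_{x_n} + i$ (up to the usual sign conventions), so that the full semiclassical symbol of $P_\varrho(h)$ is $p_\varrho(\xi) = \sum_{j=1}^{n-1}\xi_j^4 + (\xi_n+i)^4$, where $\xi$ is the (real) semiclassical cotangent variable. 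The key structural fact, as the introduction emphasizes, is that this symbol is \emph{elliptic in the semiclassical sense away from a lower-dimensional set}: $p_\varrho(\xi)=0$ forces $\xi_j=0$ for $j\le n-1$ and $(\xi_n+i)^4=0$, which is impossible for real $\xi_n$. More quantitatively, I would show $|p_\varrho(\xi)|^2 \gtrsim |\xi|^8 + 1$ for all $\xi\in\R^n$, by splitting into the regimes $|\xi|$ large (where the $|\xi|^8$ term dominates and the $(\xi_n+i)^4$ perturbation is harmless) and $|\xi|$ bounded (where $\mathrm{Im}\,p_\varrho(\xi) = 4\xi_n^3 + \dots$ and $\mathrm{Re}\,p_\varrho$ cannot both vanish). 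This lower bound, together with the elementary identity $|p_\varrho(\xi)| \gtrsim (1+|\xi|)\cdot(\text{something})$, gives the semiclassical a priori estimate $\|P_\varrho(h) w\|_{L^2} \gtrsim \|w\|_{L^2} + \|h\nabla w\|_{L^2}$ for $w\in C_c^\infty(\Omega)$ and $h$ small — essentially $\|w\|_{H^1_{scl}}\lesssim \|P_\varrho(h)w\|_{L^2}$ — via Plancherel after noting the weight $e^{x_n/h}$ is a fixed (bounded, since $\Omega$ is a slab) function on $\Omega$; more carefully one works on $\R^n$ by extending $w$ by zero and uses that $P_\varrho(h)$ has constant coefficients so Fourier analysis applies directly.

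The gain of a factor $h$ on the left (i.e. the appearance of $h\|w\|_{L^2}$ rather than $\|w\|_{L^2}$ in \eqref{carleman}) is where I must be slightly careful: the naive constant-coefficient estimate gives no $h$ loss, but once the lower-order perturbation $h^4(\sum_j A_j D_{x_j} + q) = h^3 \cdot \sum_j A_j (h D_{x_j}) + h^4 q$ is added, these terms are bounded in operator norm (on the relevant semiclassical Sobolev scale) by $O(h^3)\|w\|_{H^1_{scl}} + O(h^4)\|w\|_{L^2} \le O(h^3)(\|w\|_{L^2}+\|h\nabla w\|_{L^2})$, which for $h$ small is absorbed into the principal estimate $\|P_\varrho(h)w\|_{L^2}\gtrsim \|w\|_{L^2}+\|h\nabla w\|_{L^2}$ with room to spare. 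Thus in fact one gets the stronger conclusion $\|w\|_{L^2} + \|h\nabla w\|_{L^2} \lesssim \|e^{x_n/h}h^4\mathcal{L}_{A,q}(e^{-x_n/h}w)\|_{L^2}$, which a fortiori implies \eqref{carleman} with the stated factor $h$ (indeed with a better power). I would then remark that this matches Proposition \ref{P2} / \eqref{carF} with $\varrho = e_n$, and that the apparent weakening to $h\|w\|_{L^2}$ in the statement is just for uniformity with the general-direction version and with the absorption of perturbations in later applications.

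The main obstacle is the quantitative semiclassical ellipticity bound $|p_\varrho(\xi)|\gtrsim 1+|\xi|^4$ on all of $\R^n$: unlike the bi-Laplacian case where $p_\varrho(\xi) = (|\xi|^2 + 2i\xi_n - 1)^2$ has a transparent modulus, here $\sum_{j=1}^{n-1}\xi_j^4 + (\xi_n+i)^4$ mixes the real and imaginary parts across the coordinate $\xi_n$ only, so the estimate must exploit that the "bad" set $\{\xi' = 0\}$ is exactly where the $(\xi_n+i)^4$ factor keeps things away from zero. Concretely I expect to bound $|p_\varrho(\xi)|^2 = \big(\sum_{j<n}\xi_j^4 + \xi_n^4 - 6\xi_n^2 + 1\big)^2 + \big(4\xi_n^3 - 4\xi_n\big)^2$ from below by a careful case analysis on the size of $\xi_n$ and of $|\xi'|$; this is elementary but is the crux, and everything else (conjugation bookkeeping, Plancherel, perturbation absorption, the slab geometry ensuring the weight is bounded) is routine. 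Once this lemma is in hand, the general linear-weight version \eqref{carF} for arbitrary $\varrho$ follows by a rotation reducing to the $e_n$ direction combined with rescaling $h$, which I would state as a corollary.
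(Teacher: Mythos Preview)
Your approach has a genuine gap at its central step: the claim that the conjugated symbol $p_\varrho(\xi)=\sum_{j<n}\xi_j^4+(\xi_n+i)^4$ is nonvanishing on $\R^n$ is false. You argue that $p_\varrho(\xi)=0$ forces each summand to vanish, but $(\xi_n+i)^4$ is not nonnegative real --- it can be a negative real number and cancel $\sum_{j<n}\xi_j^4$. Concretely, take $\xi_n=1$ so that $(\xi_n+i)^4=(1+i)^4=-4$, and choose $\xi'$ with $\sum_{j<n}\xi_j^4=4$ (e.g.\ $\xi_1=\sqrt{2}$, $\xi_j=0$ otherwise); then $p_\varrho(\xi)=0$. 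In the paper's notation this is exactly the statement that the characteristic set $\{a=b=0\}$ is nonempty for the linear weight. Consequently the bound $|p_\varrho(\xi)|^2\gtrsim 1+|\xi|^8$ cannot hold, the Plancherel argument does not yield $\|w\|_{H^1_{scl}}\lesssim\|P_\varrho(h)w\|_{L^2}$, and the ``stronger conclusion'' you announce (no $h$-loss) is in fact false. The factor $h$ in \eqref{carleman} is not cosmetic: it is the genuine loss coming from the nonempty characteristic set.

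This is precisely why the paper does \emph{not} argue directly with the linear weight. In the proof of Proposition~\ref{P2} the linear weight $\varphi=x_n$ is convexified to $\psi=\varphi+\tfrac{h}{2\epsilon}\varphi^2$; on the characteristic set the Poisson bracket $\{a,b\}$, which vanishes identically for the linear weight, becomes strictly positive of size $\sim h/\epsilon$ after convexification (see \eqref{new1}--\eqref{new3}). A G\aa rding-type argument then produces the commutator gain $\tfrac{h^2}{\epsilon}\|w\|_{H^1_{scl}}^2$, which is where the $h$ in the final estimate comes from. Lemma~\ref{L1} is then simply Proposition~\ref{P2} with $\varphi=x_n$. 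A secondary issue: your closing remark that the general direction $\varrho$ follows by rotation is also incorrect, since $\sum_j D_{x_j}^4$ is not rotation invariant; the paper handles arbitrary $\varphi$ satisfying \eqref{new5} directly in Proposition~\ref{P2}.
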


Let us assume for the moment that the above lemma holds true. We would like to motivate the readers how one uses such estimates to derive certain UCP results. We derive the following simple UCP across a hyperplane with the help of the above estimate. 

\begin{proposition}[(UCP across a hyperplane)]
\label{P1}
Let $\Omega = \{x=(x',x_n)\in \R^n: a<x_n <b\}$ for some $a, b\in\mathbb{R}$ and assume that $u\in H^4(\Omega)$ satisfies
\[
\mathcal{L}_{A,q} u =0 \quad \text{ in } \ \Omega.
\]
If $u\arrowvert_{b-\varepsilon <x_n<b} =0$ for some $\varepsilon>0$, then $u \equiv 0$ in $\Omega$.
\end{proposition}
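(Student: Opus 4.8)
\noindent\emph{Proof proposal.} The plan is to deduce the statement from the Carleman estimate of Lemma \ref{L1} by testing \eqref{carleman} against a suitable cut-off of $u$ and letting $h\to 0^+$. Since $\Omega=\{a<x_n<b\}$ is connected, it suffices to prove that $u$ vanishes on $\Omega\cap\{x_n>c_2\}$ for every fixed $c_2\in(a,b-\varepsilon)$, because the union of these sets over $c_2\downarrow a$ exhausts $\Omega$. So fix such a $c_2$, choose levels $a<c_0<c_1<c_2$, and let $\chi=\chi(x_n)\in C^\infty(\mathbb{R})$ with $\chi\equiv 1$ on $\{x_n\ge c_1\}$ and $\chi\equiv 0$ on $\{x_n\le c_0\}$. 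Using $\mathcal{L}_{A,q}u=0$ we get
\[
\mathcal{L}_{A,q}(\chi u)=[\mathcal{L}_{A,q},\chi]\,u=:R .
\]
Since $\chi$ depends on $x_n$ only, $R$ is a linear combination of $u$ and its derivatives up to order three with coefficients built from $\chi',\dots,\chi''''$ and $A_n$; in particular $\operatorname{supp}R\subset\{c_0\le x_n\le c_1\}$ and $\|R\|_{L^2(\Omega)}\lesssim\|u\|_{H^3(\Omega)}<\infty$ with an $h$-independent constant.

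I would then apply \eqref{carleman} to $w:=e^{x_n/h}\chi u$; this is legitimate after a routine truncation-in-$x'$ and mollification, since $e^{x_n/h}$ is bounded on $\operatorname{supp}\chi$ and $\Omega$ is a slab, so $w$ is an $H^4$-limit of functions in $C_c^\infty(\Omega)$, which suffices for both sides of the estimate. As $e^{-x_n/h}w=\chi u$, \eqref{carleman} reads
\[
h\,\|e^{x_n/h}\chi u\|_{L^2(\Omega)}\ \lesssim\ \|e^{x_n/h}h^4 R\|_{L^2(\Omega)} .
\]
On $\operatorname{supp}R$ one has $x_n\le c_1$, hence $e^{x_n/h}\le e^{c_1/h}$, so the right side is $\lesssim e^{c_1/h}h^4\|u\|_{H^3(\Omega)}$. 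For the left side, restrict the integral to $\{x_n>c_2\}$, where $\chi\equiv1$ and $e^{x_n/h}\ge e^{c_2/h}$, giving $\|e^{x_n/h}\chi u\|_{L^2(\Omega)}\ge e^{c_2/h}\|u\|_{L^2(\Omega\cap\{x_n>c_2\})}$. Combining and dividing by $e^{c_2/h}$,
\[
h\,\|u\|_{L^2(\Omega\cap\{x_n>c_2\})}\ \lesssim\ e^{(c_1-c_2)/h}\,h^4\,\|u\|_{H^3(\Omega)} .
\]
As $c_1<c_2$, the right-hand side tends to $0$ when $h\to0^+$, so $u\equiv0$ on $\Omega\cap\{x_n>c_2\}$; since $c_2\in(a,b-\varepsilon)$ was arbitrary, $u\equiv 0$ in $\Omega$.

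The step I expect to matter most is the geometric placement of the cut-off: $\chi$ must transition at the levels $c_0<c_1$ lying \emph{strictly below} the target slab $\{x_n>c_2\}$, so that the error $R$ is weighted by at most $e^{c_1/h}$ while the quantity we control is weighted by at least $e^{c_2/h}$; this mismatch is exactly what produces the decisive factor $e^{(c_1-c_2)/h}\to0$, and it is available only because $u$ is already known to vanish for $x_n>b-\varepsilon$, so that no commutator term sits at large $x_n$. The one remaining delicate point is purely technical, namely justifying that \eqref{carleman}, stated for $w\in C_c^\infty(\Omega)$, applies to $w=e^{x_n/h}\chi u$, which is a standard density argument. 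The same scheme, with the linear weight $x_n$ replaced by a convex function of it, is what one would use to push the vanishing across a curved hypersurface.
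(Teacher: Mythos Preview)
Your argument is correct and is essentially the paper's own proof: apply the Carleman estimate of Lemma~\ref{L1} to $\chi u$, use that the commutator $[\mathcal{L}_{A,q},\chi]u$ is supported where $x_n$ is small, compare exponential weights, and send $h\to 0$. The only cosmetic difference is that the paper uses a single threshold $c_0$ (with $\chi=1$ on $\{x_n\ge c_0\}$ and $\chi=0$ near $x_n=a$), so that the factors $e^{c_0/h}$ on both sides cancel exactly and the decay comes purely from the power $h^3$; you instead introduce three levels $c_0<c_1<c_2$ and exploit the strictly stronger factor $e^{(c_1-c_2)/h}$, which is harmless overkill.
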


%We begin with proving the above Proposition and then move into deriving UCP across a hypersurface (not necessarily flat). Finally, we will show the weak UCP and the UCP for the local Cauchy data, and during this course we need suitable Carleman estimates to use. Let us first prove the above Proposition \ref{P1} while assuming the Carleman estimate given in Lemma \ref{T1}.
\begin{proof}%[Proof of Proposition \ref{P1}]
We have that $\Omega = \{x=(x',x_n)\in \R^n: a<x_n <b\}$ and $u\in H^4(\Omega)$ satisfies
\begin{equation*}
\begin{cases}
\mathcal{L}_{A,q} u =0 \quad &\text{ in } \ \Omega\\
u=0 \quad &\text{ in } \ {b-\varepsilon <x_n<b}.
\end{cases}
\end{equation*}
It is enough to show that $u=0$ in $c_0<x_n<b$ where $c_0$ is any number satisfying $a<c_0<b$.

We rewrite the estimate (\ref{carleman}) as,
\begin{equation*}
\|e^{\frac{x_n}{h}}w\|_{L^2(\Omega)} \lesssim \ h^3 \|e^{\frac{x_n}{h}}\mathcal{L}_{A,q}w\|_{L^2(\Omega)} 
\end{equation*}
which holds for all $w\in H^4_0(\Omega)$ and for $0<h<1$ sufficiently small.
Now we choose $w=\chi u$ where $\chi(x^\prime ,x_n) = \zeta(x_n)$ for some $\zeta\in C^\infty(\R)$ satisfying $\zeta =1$ for $t\ge c_0$ and $\zeta =0$ near $t\le a$. Since $u=0$ near $x_3 =b$ and $\chi =0$ near $x_3 =a$, we have that $w\in H^4_0(\Omega)$. 
%Note that, from the construction, we have $\partial^\alpha\chi =0$ near $x_n=a$ and $\partial^\alpha u=0$ near $x_n =b$ for $\alpha$ being a multi-index. 
Therefore,
%\begin{equation}
\begin{align}
\|e^{\frac{x_n}{h}}u\|_{L^2(\{c_0<x_n<b\})} &\le \|e^{\frac{x_n}{h}} (\chi u)\|_{L^2(\Omega)}\nonumber\\
& \lesssim \ h^3 \|e^{\frac{x_n}{h}}\mathcal{L}_{A,q} (\chi u)\|_{L^2(\Omega)} \nonumber\\
& \lesssim h^3 \left( \|e^{\frac{x_n}{h}}\chi \mathcal{L}_{A,q}\,u\|_{L^2(\Omega)} + \|e^{\frac{x_n}{h}}[\mathcal{L}_{A,q},\chi]u\|_{L^2(\Omega)}\right)
\label{1} 
\end{align}
%\end{equation}
where $[\mathcal{L}_{A,q},\chi]u := u\ D^4_j \chi +4 D_j u \ D_j^3 \chi + 6D_j^2 u \ D^2_j \chi + 4 D^3_j u \ D_j \chi$ is the commutator term. We observe that, $\textrm{supp}\,[\mathcal{L}_{A,q},\chi]u \subseteq \textrm{supp}\, (\nabla \chi) \subseteq  \{a\le x_n \le c_0\}$. Then using $\mathcal{L}_{A,q} u =0$ in $\Omega$, the inequality (\ref{1}) implies
\begin{equation*}
\|e^{\frac{x_n}{h}}u\|_{L^2(\{c_0<x_n<b\})} \lesssim h^3 \|e^{\frac{x_n}{h}}[\mathcal{L}_{A,q},\chi]u\|_{L^2(\{a\le x_n \le c_0\})}.
\end{equation*}
But $e^{\frac{x_n}{h}}\le e^{\frac{c_0}{h}}$ when $x_n \le c_0$ and $e^{\frac{x_n}{h}} \ge e^{\frac{c_0}{h}}$ when $x_n\ge c_0$. This yields
\begin{equation*}
\begin{aligned}
e^{\frac{c_0}{h}}\|u\|_{L^2(\{c_0<x_n<b\})} \le \|e^{\frac{x_n}{h}}u\|_{L^2(\{c_0<x_n<b\})}
& \lesssim h^3 \|e^{\frac{x_n}{h}}[\mathcal{L}_{A,q},\chi]u\|_{L^2(\{a\le x_n \le c_0\})}\\
& \lesssim h^3 e^{\frac{c_0}{h}} \|[\mathcal{L}_{A,q},\chi]u\|_{L^2(\{a\le x_n \le c_0\})}.
\end{aligned}
\end{equation*}
Since $[\mathcal{L}_{A,q},\chi]u$ is a fixed function, dividing by $e^{\frac{c_0}{h}}$ and letting $h\to 0$ shows that
\[
\|u\|_{L^2(\{c_0<x_n<b\})} =0
\]
which completes the proof.
\hfill
\end{proof}

Now we prove the Lemma \ref{L1}.

Let $\widetilde{\Omega}\subset\mathbb{R}^n$ be a non-empty open set and $\varphi\in C^{\infty}(\widetilde{\Omega};\mathbb{R})$ with $\nabla\varphi\neq 0$ be some phase function. 
Let us first consider the principal part of the semi classical operator $h^4\mathcal{L}_{A,q}(x,D)$, say $P(x,hD)$ as
\[
P(x,hD) =\  h^4\sum_{j=1}^n D^4_{x_j}; \quad h^4\mathcal{L}_{A,q}(x,D) =\ P + h^3A\cdot hD + h^4q.
\]
The operator $P$ conjugated with the exponential weight $e^{\frac{\varphi}{h}}$ is denoted as 
\[
P_{\varphi} :=\ e^{\frac{\varphi}{h}}(\sum_{j=1}^n h^4D^4_{x_j})e^{-\frac{\varphi}{h}} =  \sum_{j=1}^n (hD_{x_j} + i\partial_{x_j}\varphi)^4 =\ \mathcal{A}+i\mathcal{B} \quad\mbox{(say)}
\]
with its semi classical symbol $p_{\varphi}(x,\xi)$ given by
\[
p_{\varphi}(x,\xi) =\ \sum_{j=1}^n (\xi_j + i\partial_{x_j}\varphi)^4=\ a(x,\xi) + ib(x,\xi), \quad (x,\xi)\in (\widetilde{\Omega}\times\mathbb{R}^n)
\]
where $a(x,\xi)$ and $b(x,\xi)$ denote the Weyl symbols of the semi-classical operators $A$ and $B$ respectively with the usual summation convention:
\begin{equation}
\label{f-symbol}
a(x,\xi)= \xi_j^4 - 6(\partial_{x_j}\varphi)^2 \xi_j^2 + (\partial_{x_j}\varphi)^4 \ \ \mbox{ and } \ \ b(x,\xi) = 4(\partial_{x_k}\varphi)\xi_k^3 - 4(\partial_{x_k}\varphi)^3\xi_k.
\end{equation}
The Poisson bracket of these two symbols is given by
%\begin{equation}
\begin{align}
&\ \{a,b\}(x,\xi) \nonumber\\
:=&\ a^{\prime}_{\xi}\cdot b^{\prime}_x - a^{\prime}_x\cdot b^{\prime}_{\xi} \nonumber \\
=&\ \{4\xi_j^3 - 12\xi_j(\partial_{x_j}\varphi)^2\}\cdot \{4\xi_k^3(\partial^2_{x_jx_k}\varphi)- 12\xi_k(\partial_{x_k}\varphi)^2(\partial^2_{x_jx_k}\varphi)\} \nonumber\\ 
& -\{-12\xi_j^2(\partial_{x_j}\varphi)(\partial^2_{x_jx_k}\varphi) + 4(\partial_{x_j}\varphi)^3(\partial^2_{x_jx_k}\varphi)\}\cdot \{12\xi_k^2(\partial_{x_k}\varphi) -4 (\partial_{x_k}\varphi)^3)\} \nonumber\\
=&\ \{ 16\xi_j^3\xi_k^3 - 48\xi_j^3\xi_k(\partial_{x_k}\varphi)^2- 48\xi_j\xi_k^3(\partial_{x_j}\varphi)^2 + 144\xi_j\xi_k(\partial_{x_j}\varphi)^2(\partial_{x_k}\varphi)^2 \nonumber\\
& +144\xi_j^2\xi_k^2(\partial_{x_j}\varphi)(\partial_{x_k}\varphi)-48\xi_j^2(\partial_{x_j}\varphi)(\partial_{x_k}\varphi)^3-48(\partial_{x_j}\varphi)^3\xi_k^2(\partial_{x_k}\varphi) \nonumber\\
& + 16(\partial_{x_j}\varphi)^3(\partial_{x_k}\varphi)^3 \}\ (\partial^2_{x_jx_k}\varphi) \label{pb}.
\end{align}
We want this Poisson bracket to be 
\begin{equation}\label{new5} \{a,b\}(x,\xi)\geq 0 \end{equation}
on the set
\begin{align}
a(x,\xi) &= (\xi_j^4  + (\partial_{x_j}\varphi)^4) -\ 6 (\partial_{x_j}\varphi)^2 \xi_j^2 =0 \label{ab1}\\
\mbox{ and }\quad b(x,\xi) &= 4(\partial_{x_j}\varphi)\xi_j^3 - 4(\partial_{x_j}\varphi)^3\xi_j =0. \label{ab2} 
\end{align}
If $\{a,b\}(x,\xi)>0$ over the set $a(x,\xi)=b(x,\xi)=0$, then such weights are known to be satisfying the sub-ellipticity condition connecting the symbol $p(x,\xi)$ of the operator $P(x,D)$ and a weight function $\varphi$, see \cite{H63,Hor4}. And if  $\{a,b\}=0$ over $a=b=0$, then such weights are known as limiting Carleman weights.  

For example, if we choose  $\varphi(x)= (\varrho\cdot x)$ for some $\varrho \in \mathbb{R}^n\setminus \{0\}$ a non-zero constant vector,
then the Poisson bracket becomes zero. However, if we choose $\varphi= x_n^2$ then it satisfies the sub-ellipticity condition.

Now we introduce the idea of convexification of the weight functions. Let us choose some $\varphi$ such that \eqref{new5} holds, i.e.  $\{a,b\}(x,\xi)\geq 0$  on the set $a(x,\xi) = b(x,\xi) = 0$. Note that it does not satisfy the sub-ellipticity condition mentioned above.    
Let us replace $\varphi$ by $f(\varphi)$, where
$f^\prime > 0$ and $\frac{f^{\prime\prime}}{f^\prime}>0 $ sufficiently large.
We denote $\psi(x) = f(\varphi(x))$ which is known as the convexified weight function of $\varphi$. 
We denote by $\widetilde{a}$ and $\widetilde{b}$ be the new corresponding symbols. Let us note that 
\[\partial_{x_j}\psi = f^\prime(\varphi)\partial_{x_j}\varphi,\qquad
\partial^2_{x_jx_k}\psi =f^{\prime\prime}(\varphi)\partial_{x_j}\varphi\partial_{x_k}\varphi + f^\prime(\varphi)\partial^2_{x_jx_k}\varphi.
\]
If $\xi$ satisfies \eqref{ab1} and \eqref{ab2}, then it is natural to replace $\xi$ by 
$\eta = f^{\prime}(\varphi)\xi$ in order to preserve the conditions \eqref{ab1} and \eqref{ab2} 
for the new symbol. So, here we make two substitutions $\varphi \mapsto \psi = f(\varphi(x))$ and $\xi \mapsto \eta = f'(\varphi(x))\xi$ in \eqref{pb} 
which becomes, when restricted to $\widetilde{a}(x,\eta) =\widetilde{b}(x,\eta) = 0$,
\begin{align}
&\ \{\widetilde{a},\widetilde{b}\}(x,\eta) \nonumber\\[1mm]
=&\ \left\lbrace  16\xi_j^3\xi_k^3 - 48\xi_j^3\xi_k(\partial_{x_k}\varphi)^2- 48\xi_j\xi_k^3(\partial_{x_j}\varphi)^2 + 144\xi_j\xi_k(\partial_{x_j}\varphi)^2(\partial_{x_k}\varphi)^2 \right. \nonumber\\
& +144\xi_j^2\xi_k^2(\partial_{x_j}\varphi)(\partial_{x_k}\varphi)-48\xi_j^2(\partial_{x_j}\varphi)(\partial_{x_k}\varphi)^3-48(\partial_{x_j}\varphi)^3\xi_k^2(\partial_{x_k}\varphi) \nonumber\\
& \left. + 16(\partial_{x_j}\varphi)^3(\partial_{x_k}\varphi)^3 \right\rbrace \, (f^\prime(\varphi))^6\,\left(f^{\prime\prime}(\varphi)\partial_{x_j}\varphi\partial_{x_k}\varphi + f^\prime(\varphi)\partial^2_{x_jx_k}\varphi\right)\nonumber\\[1mm] 
=& 16\left(\xi_j^3(\partial_{x_j}\varphi)\right)^2(f^\prime(\varphi))^6\,f^{\prime\prime}(\varphi) 
- 96\left(\xi_j^3(\partial_{x_j}\varphi)\right)^2(f^\prime(\varphi))^6\,f^{\prime\prime}(\varphi)   + 144\left(\xi_j^3(\partial_{x_j}\varphi)\right)^2(f^\prime(\varphi))^6\,f^{\prime\prime}(\varphi) \nonumber\\
& +144\left(\xi_j^2(\partial_{x_j}\varphi)^2\right)^2 (f^\prime(\varphi))^6\,f^{\prime\prime}(\varphi) 
-96\left(\xi_j^2(\partial_{x_j}\varphi)^2\right)\left((\partial_{x_j}\varphi)^4\right)(f^\prime(\varphi))^6\,f^{\prime\prime}(\varphi)\nonumber\\
&+ 16\left((\partial_{x_j}\varphi)^4\right)^2(f^\prime(\varphi))^6\,f^{\prime\prime}(\varphi) + (f^\prime(\varphi))^7\, \{a,b\}(x,\xi)\nonumber\\[1mm]
=&\ 64\left(\xi_j(\partial_{x_j}\varphi)^3\right)^2(f^\prime(\varphi))^6\,f^{\prime\prime}(\varphi) + 4\left( \xi_j^4 - (\partial_{x_j}\varphi)^4\right)^2(f^\prime(\varphi))^6\,f^{\prime\prime}(\varphi) +(f^\prime(\varphi))^7\, \{a,b\}(x,\xi).\label{new1}
\end{align}
We use relations \eqref{ab1}, \eqref{ab2} to deduce the last line. Now by using \eqref{ab2} again, we write
\begin{align}
64\left(\xi_j(\partial_{x_j}\varphi)^3\right)^2=\ 16\left(\xi_j^3(\partial_{x_j}\varphi) + \xi_j(\partial_{x_j}\varphi)^3\right)^2 =&\ 16\left(\xi_j(\partial_{x_j}\varphi)(\xi_j^2+(\partial_{x_j}\varphi)^2)\right)^2\nonumber\\[1mm]
\geq&\ 64 \left( \xi_j^2(\partial_{x_j}\varphi)^2 \right)^2\nonumber\\[1mm]
=&\ \frac{16}{9} \left((\xi_j^4 + (\partial_{x_j}\varphi)^4)\right)^2 \ \ \mbox{ (by }\eqref{ab1} )\nonumber\\[1mm]
\geq&\ \frac{16}{9}\left((\partial_{x_j}\varphi)^4\right)^2 \ > 0.\label{new2}
\end{align}
Therefore from \eqref{new1}, \eqref{new2} we see that when $\varphi$, satisfying \eqref{new5}, is replaced by the convexified weight function $\psi=f(\varphi)$, where $f^\prime, f^{\prime\prime}>0$, we obtain  
\begin{equation}
\label{new3}
\{\widetilde{a},\widetilde{b}\}(x,\eta)  > \frac{16}{9}(f^\prime(\varphi))^6\,f^{\prime\prime}(\varphi)\, \left((\partial_{x_j}\varphi)^4\right)^2
\end{equation}
which is strictly positive. 

The idea of covexfication will be crucially used in order to derive the Carleman estimates for those weight functions satisfying \eqref{new5}. 
At this end, we introduce the semi classical Sobolev space of order one $H^1_{scl}(\Omega)$ associated with its norm
\[
\|u\|^2_{H^1_{scl}(\Omega)} = \|u\|^2_{L^2(\Omega)} + \|h\nabla u\|^2_{L^2(\Omega)}.
\]
In general one defines the semi-classical Sobolev spaces $H^s(\mathbb{R}^n)$, with $s\in\mathbb{R}$ equipped with the norm 
\[
\|u\|_{H^s(\mathbb{R}^n)} = \|{\langle hD\rangle}^su\|_{L^2}\ \mbox{ where }\ \langle \xi \rangle = (1+|\xi|^2)^{\frac{1}{2}}.
\]
We begin with the following $H^1_{scl}$ Carleman estimate which does not involve the boundary terms.

\begin{proposition}
\label{P2}
Let $\Omega\Subset\widetilde{\Omega}$ are two open subsets of $\mathbb{R}^n$. Let $\varphi\in C^{\infty}(\widetilde{\Omega};\mathbb{R})$ such that \eqref{new5} is satisfied.
Then the Carleman estimate
\begin{equation}
\label{c-e}
 h^2\|w\|^2_{H^1_{scl}(\Omega)} \lesssim \ \|e^\frac{\varphi}{h}h^4\mathcal{L}_{A,q}(e^{-\frac{\varphi}{h}}w)\|^2_{L^2(\Omega)} 
\end{equation}
holds for all $w\in C_c^{\infty}(\Omega)$ and $0<h<1$ small enough. 
\end{proposition}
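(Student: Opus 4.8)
The strategy is the standard conjugation–then–symbol-calculus route, upgraded to an $H^1_{scl}$ estimate by exploiting the positivity \eqref{new3} of the convexified Poisson bracket. First I would reduce to the case of the principal part alone: since $A \in W^{1,\infty}$ and $q \in L^\infty$, the perturbation satisfies $\|e^{\varphi/h} h^4 (h^{-3} A \cdot hD + h^{-4} q)(e^{-\varphi/h} w)\|_{L^2} \lesssim h^3 \|w\|_{H^1_{scl}} + h^4\|w\|_{L^2} \lesssim h^2 \|w\|_{H^1_{scl}}$ for $h$ small, which can be absorbed into the left-hand side of \eqref{c-e} once the estimate is proved for $P_\varphi = e^{\varphi/h}(h^4\sum_j D_{x_j}^4)e^{-\varphi/h}$. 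So it suffices to show $h \|w\|_{H^1_{scl}(\Omega)} \lesssim \|P_\varphi w\|_{L^2(\Omega)}$ for all $w \in C_c^\infty(\Omega)$.

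\textbf{The convexification step.} The weight $\varphi$ itself only gives $\{a,b\}\ge 0$ on $\{a=b=0\}$, which is not enough; so I would replace $\varphi$ by $\psi = f(\varphi)$ with $f' > 0$ and $f''/f'$ large, as set up before the statement, and prove the estimate for $\psi$. The gain is exactly \eqref{new3}: on the characteristic set $\widetilde a = \widetilde b = 0$ the bracket $\{\widetilde a,\widetilde b\}$ is bounded below by a positive multiple of $(f')^6 f'' \big((\partial_{x_j}\varphi)^4\big)^2$, hence strictly positive (using $\nabla\varphi \ne 0$). Since $\Omega \Subset \widetilde\Omega$ and $f', f''$ can be taken comparable to constants on the relevant compact set, one gets the clean subellipticity inequality $\{\widetilde a, \widetilde b\}(x,\xi) \gtrsim 1$ on $\{\widetilde a = \widetilde b = 0\}$ over $\overline\Omega$, together with the ellipticity of the full symbol $\widetilde a + i\widetilde b$ at infinity (the principal part $\sum_j \xi_j^4$ dominates for large $\xi$). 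Note the estimate for $\psi$ implies the one for $\varphi$: $e^{\psi/h}$ and $e^{\varphi/h}$ differ by conjugating $w$ by a smooth bounded nonvanishing factor, which only changes constants.

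\textbf{The core $L^2$ (in fact $H^1_{scl}$) estimate.} Write $P_\psi = \mathcal A + i\mathcal B$ with $\mathcal A, \mathcal B$ formally self-adjoint semiclassical operators of order $4$ with Weyl symbols $\widetilde a, \widetilde b$. Then
\[
\|P_\psi w\|_{L^2}^2 = \|\mathcal A w\|^2 + \|\mathcal B w\|^2 + i(\mathcal B \mathcal A w \mid w) - i(\mathcal A \mathcal B w \mid w) = \|\mathcal A w\|^2 + \|\mathcal B w\|^2 + (i[\mathcal A, \mathcal B] w \mid w),
\]
and $i[\mathcal A, \mathcal B]$ is a semiclassical operator of order $6$ with principal symbol $h\{\widetilde a, \widetilde b\}$. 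The point is a Gårding-type argument: on the region where $\widetilde a^2 + \widetilde b^2$ is small compared to $\langle\xi\rangle^8$, the symbol $\{\widetilde a, \widetilde b\}$ is positive by subellipticity, so $(i[\mathcal A,\mathcal B] w \mid w) \gtrsim h \|w\|_{H^{3}_{scl}}^2$ modulo a term controlled by $\|\mathcal A w\|^2 + \|\mathcal B w\|^2$ plus lower-order errors of size $h^2\|w\|^2_{H^3_{scl}}$; on the complementary elliptic region, $\|\mathcal A w\|^2 + \|\mathcal B w\|^2 \gtrsim \|w\|_{H^4_{scl}}^2$ directly. Combining, $\|P_\psi w\|_{L^2}^2 \gtrsim h\|w\|_{H^{1}_{scl}}^2$ (one keeps only the order-$1$ part of the $H^3_{scl}$ bound, which is all that is claimed), after absorbing the $O(h^2)$ remainders for $h$ small. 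I would carry this out either via the sharp Gårding inequality applied to a suitable microlocal partition, or — more in the spirit of the cited notes \cite{Lerner,SALON,TDN} — by the pseudodifferential factorization/Fefferman–Phong-type argument for operators satisfying the subellipticity condition $\{a,b\}>0$ on $a=b=0$, which is precisely the hypothesis \eqref{new3} secures.

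\textbf{Main obstacle.} The delicate point is the Gårding/microlocalization step: $i[\mathcal A,\mathcal B]$ has order $6$ and the bracket $\{\widetilde a,\widetilde b\}$ is only sign-definite on the codimension-$2$ set $\{\widetilde a = \widetilde b = 0\}$, not everywhere, so one cannot apply a naive Gårding inequality to $i[\mathcal A,\mathcal B]$ alone. One must borrow positivity from $\|\mathcal A w\|^2 + \|\mathcal B w\|^2$ away from the characteristic set and patch the two regions together with pseudodifferential cutoffs, tracking that every commutator/remainder generated is of size $O(h^2)$ times the $H^3_{scl}$-norm and hence negligible against the main $O(h)$ gain. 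Keeping the quartic (non-Laplacian) structure of $\sum_j \xi_j^4$ honest throughout — in particular verifying that \eqref{new3} really does hold with a uniform positive constant on $\overline\Omega$, using $\nabla\varphi\neq 0$ and the explicit lower bound \eqref{new2} — is where the specific features of this operator enter, as opposed to the bi-Laplacian case.
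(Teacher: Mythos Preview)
There is a genuine gap in the convexification step. You take $f$ to be $h$-independent (``$f',f''$ comparable to constants''), which indeed yields $\{\widetilde a,\widetilde b\}\gtrsim 1$ on the characteristic set and hence a Carleman estimate for $\psi$. But your claim that ``$e^{\psi/h}$ and $e^{\varphi/h}$ differ by a smooth bounded nonvanishing factor'' is false: the ratio is $e^{(f(\varphi)-\varphi)/h}$, which blows up as $h\to 0$ unless $f(\varphi)-\varphi = O(h)$. So the estimate for $\psi$ does \emph{not} transfer to the given $\varphi$, and since the proposition must apply in particular to the limiting weight $\varphi(x)=x_n$ (where $\{a,b\}\equiv 0$ on $a=b=0$), you cannot sidestep this by arguing that $\varphi$ itself is already subelliptic.

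The paper resolves exactly this by making the convexification $h$-dependent: $\psi=\varphi+\tfrac{h}{2\epsilon}\varphi^2$, so that $e^{(\psi-\varphi)/h}=e^{\varphi^2/(2\epsilon)}$ is uniformly bounded. The price is that now $f''=h/\epsilon$ and hence $\{\widetilde a,\widetilde b\}\gtrsim h/\epsilon$ on the characteristic set, not $\gtrsim 1$; this is why the final gain is $h^2$ rather than the $h$ you claimed. A second, independent difference: the paper avoids your microlocal partition entirely. Because $\widetilde b$ is a specific cubic polynomial in $\eta$ and $\{\widetilde a,\widetilde b\}$ is quartic, one can write the \emph{global} algebraic identity $\{\widetilde a,\widetilde b\}(x,\eta)=d(x,\xi)+c(x)\,\widetilde a(x,\eta)+l(x,\eta)\,\widetilde b(x,\eta)$ with $d$ elliptic of order~$2$; then a direct G\aa rding inequality on $d$ alone suffices, and the $c\widetilde a$, $l\widetilde b$ contributions are absorbed by $\|\widetilde{\mathcal A}w\|^2+\|\widetilde{\mathcal B}w\|^2$ via Cauchy--Schwarz. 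This is more elementary than the Fefferman--Phong/cutoff route you sketch, and it cleanly produces the $H^1_{scl}$ norm (order~$2$ symbol~$d$) rather than requiring you to throw away an $H^3_{scl}$ bound.
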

\begin{proof}
The proof is divided into two parts: using the notation as before, we will show first
\begin{equation}
\label{c-e-1}
h^2\|w\|^2_{H^1_{scl}(\Omega)} \lesssim  \|P_{\varphi}w\|^2_{L^2(\Omega)}, \quad w\in C^{\infty}_c(\Omega)
\end{equation}
and then we add the lower order terms into it to get the desired estimate \eqref{c-e}.

Let us write
\begin{equation*}
\begin{aligned}
P_{\varphi} &= \lb h^4 D_{x_j}^4 - 6h^2(\partial_{x_j}\varphi)^2D_{x_j}^2 + (\partial_{x_j}\varphi)^4 \rb + i \lb 4 h^3(\partial_{x_j}\varphi)D_{x_j}^3 - 4 h(\partial_{x_j}\varphi)^3D_{x_j}\rb \\[1mm]
&= \mathcal{A}+i\mathcal{B}, \ \mbox{say}.
\end{aligned}
\end{equation*}
Then for $w\in C_c^{\infty}(\Omega)$,
\begin{align*}
\|P_{\varphi}w\|^2_{L^2} &=((\mathcal{A}+i\mathcal{B})w\, |\, (\mathcal{A}+i\mathcal{B})w)\\[1mm]
& = \|\mathcal{A}w\|^2_{L^2} + \|\mathcal{B}w\|^2_{L^2} + i(\mathcal{B}w\, |\, \mathcal{A}w) - i(\mathcal{A}w | \mathcal{B}w).
\end{align*}
The standard Weyl quantizations gives the commutator term as
\[ i[\mathcal{A},\mathcal{B}] := i\left(\mathcal{A}\mathcal{B} - \mathcal{B}\mathcal{A}\right)= \mbox{Op}_h(h\{a,b\}).\]

For the moment, let us consider a particular case when $\varphi(x)=(\varrho\cdot x)$ for some $\varrho\in\mathbb{R}^n$ non-zero vector. We know that in this case the Poisson bracket becomes zero. 
Also, in this case,  
$\mathcal{A}_{(\varrho\cdot x)}= \lb h^4 D_{x_j}^4 - 6h^2\varrho_j^2D_{x_j}^2 + \varrho_j^4 \rb$ 
and $\mathcal{B}_{(\varrho\cdot x)}=\lb 4 h^3\varrho_jD_{x_j}^3 - 4 h\varrho_j^3D_{x_j}\rb $ are constant coefficient self-adjoint operators.
Thus the commutator
term $i[\mathcal{A}_{(\varrho\cdot x)},\mathcal{B}_{(\varrho\cdot x)}]$ acting on $C_c^{\infty}(\Omega)$ always satisfy
\[i[\mathcal{A}_{(\varrho\cdot x)},\mathcal{B}_{(\varrho\cdot x)}] %=\, i\lb\mathcal{A}_{(\varrho\cdot x)}\mathcal{B}_{\varrho\cdot x)} - \mathcal{B}_{(\varrho\cdot x)}\mathcal{A}_{(\varrho\cdot x)}\rb 
= 0.\]
Therefore,
\[
\|P_{(\varrho\cdot x)}w\|^2_{L^2} =\|\mathcal{A}_{(\varrho\cdot x)}w\|^2_{L^2} + \|\mathcal{B}_{(\varrho\cdot x)}w\|^2_{L^2}.
\]
Now, for any $w\in C^{\infty}_c(\Omega),$
\begin{align*}
(\mathcal{A}_{(\varrho\cdot x)}w\, |\, w)=& \left( \lb h^4 D_{x_j}^4 - 6h^2\varrho_j^2D_{x_j}^2 + \varrho_j^4 \rb w\ |\ w\right) \\ 
=& [ \ h^4(D_{x_j}^2w\ |\ D_{x_j}^2w) -6h^2\varrho_j^2(D_{x_j}w | D_{x_j}w) + \varrho_j^4(w\ |\ w) ].
\end{align*}
By using the inequality $|\alpha\beta| \leq \frac{\delta}{2}|\alpha|^2 + \frac{1}{2\delta}|\beta|^2$ on the left hand side and
using the Poincar\'{e} inequality on the first term of the right hand side, we then obtain,
\[
\frac{1}{2\varrho_j^4}\|\mathcal{A}w\|^2_{L^2} + \frac{\varrho_j^4}{2}\|w\|^2_{L^2}\ \geq h^2\|hDw\|^2_{L^2} - \mathcal{O}(h^2)\|Dw\|^2_{L^2} + \varrho_j^4\|w\|^2_{L^2}.
\]
Consequently, we get 
\begin{equation}
\label{Au}
h^2\|w\|^2_{H^1_{scl}} \lesssim \|\mathcal{A}_{(\varrho\cdot x)}w\|^2_{L^2} + h^2\|Dw\|^2_{L^2}.
\end{equation}
Now we could try to use that $\mathcal{B}$ is associated to two non-vanishing gradient fields to obtain
\[
\|\mathcal{B}_{(\varrho\cdot x)}w\|_{L^2} \geq \mathcal{O}(h)\|Dw\|_{L^2} - \mathcal{O}(h^3)\|D^3w\|_{L^2}.
\]
But it is not good enough to absorb the $\mathcal{O}(h^2)\|Dw||^2$ term in \eqref{Au} to obtain \eqref{c-e-1}. We seek for the idea of convexification of the weight function to establish such estimates.

In general, for any $\varphi$ satisfying $\{a,b\}\geq 0$ whenever $a=b=0$,  we convexify the weight function $\varphi$ and introduce $\psi=f(\varphi)$, where $f(\lambda) =\lambda + \frac{h}{2\epsilon}\lambda^2$, $\lambda\in\mathbb{R}$, i.e.
\begin{equation}
\label{conwei}
\psi = \varphi + \frac{h}{2\epsilon}\varphi^2\quad\mbox{in }\widetilde{\Omega}
\end{equation} 
with $\epsilon$ a suitable small parameter to be chosen independent of $h$ and $0<h<\epsilon<1$. 

We denote by $\widetilde{a}$ and $\widetilde{b}$ be the new corresponding symbols and by $\widetilde{\mathcal{A}}$
and $\widetilde{\mathcal{B}}$ be the corresponding operators when $\varphi$ is replaced by $\psi$. 

Let $\eta=(1+\frac{h}{\epsilon}\varphi)\xi$ and we deduce (cf. \eqref{new1} and \eqref{new3}), whenever $\widetilde{a}(x,\eta)=\widetilde{b}(x,\eta)=0$,
\begin{equation}
\begin{aligned}
\label{new4}
\{\widetilde{a},\widetilde{b}\}(x,\eta)=&\, 64\,\left(\xi_j(\partial_{x_j}\varphi)^3\right)^2\frac{h}{\epsilon}(1+\frac{h}{\epsilon}\varphi)^6 + 4\,\left( \xi_j^4 - (\partial_{x_j}\varphi)^4\right)^2\frac{h}{\epsilon}(1+\frac{h}{\epsilon}\varphi)^6\\
& \hspace{4.3cm}+(1+\frac{h}{\epsilon}\varphi)^7\, \{a,b\}(x,\xi) \ = d(x,\xi)\ \mbox{ (say)}
\end{aligned}
\end{equation}
with 
\[d(x,\xi) \geq \frac{16}{9}\,\frac{h}{\epsilon}(1+\frac{h}{\epsilon}\varphi)^6\left((\partial_{x_j}\varphi)^4\right)^2 \ > 0 .\]

Now as we see that on the $x$-dependent surface in $\eta$-space, given by $\widetilde{b}(x, \eta) = 0$,
the fourth order polynomial $\{\widetilde{a}, \widetilde{b}\}(x,\eta)$ becomes positive when
$\widetilde{a}(x,\eta)=\eta_j^4 - 6\eta_j^2(\partial_j\psi)^2 +(\partial_j\psi)^4=0$. Thus for some $c\in C^{\infty}(\Omega;\mathbb{R})$,
\[\{\widetilde{a},\widetilde{b}\}(x, \eta)=\ d(x,\xi) +  c(x)\widetilde{a}(x,\eta),\quad\mbox{whenever }\widetilde{b}(x,\eta) = 0.\]
Then we consider
\[\{\widetilde{a},\widetilde{b}\}(x, \eta) -d(x,\xi) - c(x)\widetilde{a}(x,\eta)\]
which is a fourth order polynomial in $\eta$, vanishing when $\widetilde{b}(x,\eta)=\sum_j 4(\partial_{x_j}\psi)\eta_j^3 - \sum_j 4(\partial_{x_j}\psi)^3\eta_j =0$.
Thus it is of the form $l(x,\eta)\widetilde{b}(x,\eta)$ where $l(x,\eta)$ is affine in $\eta$ with smooth
coefficients and hence we end up with
\begin{equation}
\label{commu}
\{\widetilde{a},\widetilde{b}\}(x,\eta)=\ d(x,\xi) + c(x)\widetilde{a}(x,\eta) + l(x,\eta)\widetilde{b}(x,\eta).
\end{equation}
On the other hand, we have the standard Weyl quantizations
\begin{align*}
 \mbox{Op}_h(c\widetilde{a})\ =&\ \frac{1}{2}c\circ \widetilde{\mathcal{A}} + \frac{1}{2}\widetilde{\mathcal{A}}\circ c + h^4q_1(x) \\
 \mbox{Op}_h(l\widetilde{b})\ =&\ \frac{1}{2}L\widetilde{\mathcal{B}} + \frac{1}{2}\widetilde{\mathcal{B}}L + h^4q_2(x),
\end{align*}
where $q_j$'s ($j=1,2$) are smooth functions which together with their derivatives are bounded
uniformly with respect to $\epsilon$ near $\overline{\Omega}$.
Now the commutator term is given by
\[ i[\widetilde{\mathcal{A}},\widetilde{\mathcal{B}}]=\ \mbox{Op}_h(h\{\widetilde{a},\widetilde{b}\}).\]
From \eqref{new4} we would like to write,
\begin{align}
\{\widetilde{a},\widetilde{b}\}(x,\eta) \geq \ &\ 64\frac{h}{\epsilon}(1+\frac{h}{\epsilon}\varphi)^6\ \left(\xi_j(\partial_{x_j}\varphi)^3\right)^2\notag\\
= \ & 32\,\frac{h}{\epsilon}(1+\frac{h}{\epsilon}\varphi)^6\ \left(\xi_j(\partial_{x_j}\varphi)^3\right)^2 + \ 32\,\frac{h}{\epsilon}(1+\frac{h}{\epsilon}\varphi)^6\ 8\left(\xi_j(\partial_{x_j}\varphi)^3\right)^2 \notag\\
> \ & 32\,\frac{h}{\epsilon}(1+\frac{h}{\epsilon}\varphi)^4\left(\eta_j(\partial_{x_j}\varphi)^3\right)^2+ \frac{8}{9}\,\frac{h}{\epsilon}(1+\frac{h}{\epsilon}\varphi)^6\left((\partial_{x_j}\varphi)^4\right)^2\notag\\
= \ & \frac{h}{\epsilon}\, \widetilde{d}(x,\eta)\,\, \mbox{(say)} . \label{newd}
\end{align}
Thus we have from \eqref{commu} and \eqref{newd},
\[
h\{ \widetilde{a},\widetilde{b}\}(x,\eta) \geq \frac{h^2}{\epsilon}\widetilde{d}(x,\eta)+ hc(x)\widetilde{a}(x,\eta) + hl(x,\eta)\widetilde{b}(x,\eta),\ \ (x,\eta)\in (\widetilde{\Omega}\times \mathbb{R}^n).
\]
Now suppose that $0< h\ll \epsilon \ll 1$. Since $\widetilde{d}$ is elliptic and of order $2$, there is a constant
$\widetilde{c}_{\widetilde{\Omega}}> 0$ independent of $\epsilon$, such that
\[
\widetilde{d}(x,\eta)\ \geq\ \widetilde{c} \langle \eta \rangle^2, \quad x \mbox{ near }\overline{\Omega},\ \ \eta\in\mathbb{R}^n.
\]
Then by using the G\aa{}rding inequality one simply gets
\[
(\widetilde{D}w\, |\, w) \ \geq \frac{\widetilde{c}}{2}\|w\|^2_{H^1_{scl}}, \quad w\in C_c^{\infty}(\Omega)\ \mbox{ and }\ h\ \mbox{ is small enough}.
\]
Thus on the operator level it implies that
\begin{align*}
i([\widetilde{\mathcal{A}},\widetilde{\mathcal{B}}]w\, |\, w ) \geq&  \frac{h^2}{\epsilon}(\widetilde{D} w\, |\, w) + h\Re(c\widetilde{\mathcal{A}}w\, |\,w) + h\Re(\widetilde{\mathcal{B}}w\, |\, Lw) +h^5((q_1+q_2)w\, |\,w)\\
\geq&\ \frac{\widetilde{c}}{2}\frac{h^2}{\epsilon}\|w\|^2_{H^1_{scl}}- \underbrace{Ch(\|\widetilde{\mathcal{A}}w\|_{L^2}\|w\|_{L^2} + \|\widetilde{\mathcal{B}}w\|_{L^2}\|hDw\|_{L^2})}_{\leq\ \frac{1}{2}\|\widetilde{\mathcal{A}}w\|^2_{L^2}+\frac{1}{2}\|\widetilde{\mathcal{B}}w\|^2_{L^2}+\frac{C_1h^2}{2}(\|w\|^2_{L^2}+\|hD w\|^2_{L^2})} -\mathcal{O}(h^5)\|w\|^2_{L^2}.
\end{align*}
Now when $0 < h \ll \epsilon\ll 1$, we obtain
\begin{align*}
\|P_{\psi}w\|^2_{L^2}= \|(\widetilde{\mathcal{A}}+i\widetilde{\mathcal{B}})w\|^2_{L^2}\ = \|\widetilde{\mathcal{A}}w\|^2_{L^2} + \|\widetilde{\mathcal{B}}w\|^2_{L^2} + i([\widetilde{\mathcal{A}},\widetilde{\mathcal{B}}]w\, |\,w) \geq C_2\frac{h^2}{\epsilon}\|w\|^2_{H^1_{scl}}.
\end{align*}
Furthermore, since $e^{\frac{\varphi^2}{2\epsilon}}$ and its all derivatives are bounded in $\Omega$ by some constant independent of $h$, with $0 < h \ll \epsilon\ll 1$, we finally get
\begin{equation}
\label{c-e-2}
h^2\|w\|^2_{H^1_{scl}(\Omega)} \lesssim \ \|e^{\frac{\varphi}{h}}h^4\sum_j D^4_{x_j}(e^{-\frac{\varphi}{h}}w)\|^2_{L^2(\Omega)}.
\end{equation}
This completes the first part, namely establishing the result \eqref{c-e-1}. Now we add the lower order terms into \eqref{c-e-2}.

\paragraph{(a) Addition of the zeroth order term $(h^{4}q)$ where $q\in L^{\infty}(\Omega,\mathbb{C})$:}
\[
\|qw\|_{L^2}\ \leq\|q\|_{L^{\infty}}\|w\|_{L^2} \leq \|q\|_{L^{\infty}}\|w\|_{H^{1}_{scl}(\Omega)}.
\]
\paragraph{(b) Addition of the first order term $(h^{4}A\cdot D)$ where $A\in W^{1,\infty}(\Omega,\mathbb{C}^n)$:}
\[
h^{3}e^{\frac{\varphi\cdot x}{h}}(A\cdot hD) e^{-\frac{\varphi}{h}}\ =\ h^{3}( iA\cdot\nabla\varphi +A\cdot hD).
\]
For the first term, we can write
\[ \|(A\cdot\nabla\varphi)w\|_{L^2}\leq\|A\cdot\nabla\varphi\|_{L^{\infty}}\|w\|_{H^{1}_{scl}(\Omega)} = \mathcal{O}(1)\|w\|_{H^1_{scl}(\Omega)}.
\]
Similarly the second term can be estimated as,
\[
\|A\cdot hDw\|_{L^2} \leq \|A\|_{L^{\infty}}\|hDw\|_{L^2} = \mathcal{O}(1)\|w\|_{H^1_{scl}(\Omega)}.
\]
Therefore,
\[
\|e^{\frac{\varphi}{h}}\{h^3(A\cdot hD) + h^4q\} e^{-\frac{\varphi}{h}}w \|_{L^2} \leq \mathcal{O}(h^3)\|w\|_{H^{1}_{scl}(\Omega)}.
\]    
Thus for $0<h\ll 1$ small enough, the above $\mathcal{O}(h^3)$ term gets absorbed into the left hand side of \eqref{c-e-2} to give
\[
h^2\|w\|^2_{H^1_{scl}(\Omega)} \lesssim \|e^{\frac{\varphi}{h}}h^4\mathcal{L}_{A,q}(e^{-\frac{\varphi}{h}}w)\|^2_{L^2(\Omega)}.
\]
This finishes the proof.
\end{proof}

\begin{proof}[Proof of Lemma \ref{L1}]
It directly follows from the above Proposition \ref{P2} by choosing $\varphi(x)=x_n$. 
\end{proof}

Next we prove that if a solution $u$ of $\mathcal{L}_{A,q} u=0$ vanishes on one side of a hypersurface (not necessarily flat) near some point $x_0$, then $u$ vanishes in a neighbourhood of $x_0$.

\begin{proposition}[(UCP across a hypersurface)]
\label{T}
Assume that $x_0\in\Omega$. Let $V$ be a neighbourhood of $x_0$ and $S$ be a $C^\infty$-hypersurface through $x_0$ such that $V = V_+ \cup S \cup V_-$ where $V_+$ and $V_-$ denote the two sides of $S$. If $u\in H^4(V)$ satisfies
\begin{equation*}
\begin{aligned}
\mathcal{L}_{A,q} u = 0 \quad &\text{ in } \ V\\
u = 0 \quad &\text{ in } \ V_+,
\end{aligned}
\end{equation*}
then $u=0$ in some neighbourhood of $x_0$.
\end{proposition}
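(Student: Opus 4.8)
\noindent\textbf{Proof proposal for Proposition~\ref{T}.}
The plan is to rerun the localisation argument from the proof of Proposition~\ref{P1}, with the linear weight replaced by a weight $\varphi$ tailored to $S$. After a translation take $x_0=0$, and near $0$ write $S=\{g=0\}$ with $g\in C^\infty$, $\nabla g(0)=\nu\neq 0$ and $V_+=\{g>0\}$. I would look for $\varphi\in C^\infty$ near $0$ with: (a) $\varphi(0)=0$ and $\nabla\varphi(0)$ a positive multiple of $\nu$; (b) $\varphi(x)\le -c|x|^2$ on $\overline{V_-}$ near $0$ for some $c>0$ — i.e.\ the level set $\{\varphi=0\}$ is tangent to $S$ at $0$ and bends strictly towards $V_+$; and (c) $\varphi$, after the convexification \eqref{conwei} of Proposition~\ref{P2} (or a stronger one), satisfies \eqref{new5}, so that the Carleman estimate \eqref{c-e} applies on a small ball about $0$. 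When $S$ bends towards $V_-$ near $0$ (equivalently, the restriction of the Hessian of $g$ to $\nu^\perp$ is $\ge 0$), property (b) already holds for the linear weight $\varphi=\nu\cdot x$, since then $\overline{V_-}$ lies strictly below the tangent hyperplane; in general the natural candidate is a convexified modification of the defining function, e.g.\ $\varphi=g(x)-c|x|^2$ with $c$ large, for which (a) and (b) are immediate ($g\le 0$ on $\overline{V_-}$) and (c) is the substantive point. (One could instead flatten $S$ to $\{x_n=0\}$ by a $C^\infty$ diffeomorphism, at the price of turning $\mathcal{L}_{A,q}$ into a fourth order elliptic operator with variable principal part, and work with $\varphi=x_n-c|x'|^2$; this amounts to re-establishing Proposition~\ref{P2} for the transformed operator, which the Weyl-calculus argument there delivers once the matching sub-ellipticity condition is checked.)

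Granting such a $\varphi$, fix $\chi\in C_c^\infty$ supported in a small ball $B(0,r)\subset V$ with $\chi\equiv 1$ on $B(0,r')$, $r'<r$, so that $w:=\chi u\in H^4_0$. Since $\mathcal{L}_{A,q}u=0$, we get $\mathcal{L}_{A,q}w=[\mathcal{L}_{A,q},\chi]u$, a fixed function supported in $\operatorname{supp}\nabla\chi$; and because $u=0$ on $V_+$, it is in fact supported in $\operatorname{supp}\nabla\chi\cap\overline{V_-}$, where $|x|\ge r'$ and hence, by (b), $\varphi\le -cr'^2=:-2\delta<0$. By continuity $\varphi>-\delta$ on some ball $B(0,\rho)\subset B(0,r')$. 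Feeding $w$ into \eqref{c-e}, bounding $e^{\varphi/h}$ from below by $e^{-\delta/h}$ on $B(0,\rho)$ and from above by $e^{-2\delta/h}$ on the commutator's support, and dividing by $e^{-\delta/h}$, one obtains
\[
\|u\|_{L^2(B(0,\rho))}\ \lesssim\ h^{3}e^{-\delta/h}\,\|[\mathcal{L}_{A,q},\chi]u\|_{L^2}\ \to\ 0\qquad(h\to 0),
\]
so $u=0$ on $B(0,\rho)$, i.e.\ in a neighbourhood of $x_0$.

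The hard part is (c): establishing the Carleman estimate for the adapted weight. For the bi-Laplacian or powers of $-\Delta$ a convexified distance-type weight works for every $C^\infty$ hypersurface, because there the (convexified) sub-ellipticity condition essentially reduces to $\nabla^2\varphi\gtrsim 0$. Here \eqref{new5} is genuinely anisotropic: by \eqref{pb} it reads $\operatorname{Re}\sum_{j,k}(\xi_j+i\partial_j\varphi)^3\,\overline{(\xi_k+i\partial_k\varphi)^3}\,\partial^2_{jk}\varphi\ge 0$ on the characteristic set $\{\sum_j(\xi_j+i\partial_j\varphi)^4=0\}$, and a weight forced to bend towards $V_+$ has a strongly negative Hessian, so \eqref{new5} fails for $\varphi$ itself. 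One must rely on convexification $\psi=f(\varphi)$: as in the computation leading to \eqref{new1}, this adds to the Poisson bracket the nonnegative quantity $\big[64\big(\textstyle\sum_j\xi_j(\partial_j\varphi)^3\big)^2+4\big(\sum_j(\xi_j^4-(\partial_j\varphi)^4)\big)^2\big](f')^6f''$, which, for $f''$ large enough (e.g.\ $f(\lambda)=e^{\beta\lambda}$ with $\beta$ large), dominates the fixed negative term $(f')^7\{a,b\}$ — provided that bracket does not vanish on the characteristic set. Verifying this non-degeneracy near $x_0$ (it is clear when $\nabla\varphi(x_0)$ is a coordinate direction, and requires care for general $\nu$, being tied to the anisotropy of $\sum_j D_{x_j}^4$) and reading off how small the neighbourhood of $x_0$ must be taken, is the technical core; once it is in hand, everything else is the template of Proposition~\ref{P1}.
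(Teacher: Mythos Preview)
Your localisation template is correct, but you reach the Carleman estimate by a genuinely different route from the paper, and the difference is the sign of the weight's curvature. You bend $\varphi$ \emph{towards} $V_+$ (taking $\varphi=g-c|x|^2$), which makes the geometry (b) immediate but forces a negative Hessian, so \eqref{new5} fails for $\varphi$ itself and you are led to a strong convexification $\psi=e^{\beta\varphi}$. The paper does the opposite: in the flat case it chooses $\varphi_0=-x_n+|x'|^2+\delta^2$, whose Hessian $\mathrm{diag}(2,\dots,2,0)$ is nonnegative, verifies \eqref{new5} by the direct computation $\{a,b\}=32\sum_{j<n}(\xi_j^2+4x_j^2)^3\ge 0$, and then invokes Lemma~\ref{L2}/Proposition~\ref{P2} as written; its cutoff is accordingly built from the level sets of $\varphi_0$ (a product $\zeta(\varphi_0/\delta^2)\,\eta(|x|/2\delta)$) rather than a pure radial bump, and the general $S$ is handled by rotating so that $\nabla\varphi_0(0)=\pm e_n$ and adding $\widetilde C|x'|^2$ to keep the Hessian nonnegative. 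Your route is more robust to the shape of $S$ but, as you present it, needs a Carleman estimate not literally covered by Proposition~\ref{P2}; the paper's stays entirely within the estimates it has already proved. Two remarks sharpen your argument: first, the ``non-degeneracy'' worry in your last paragraph is unnecessary --- by \eqref{new2} the convexification contribution is bounded below on the characteristic set by $\tfrac{16}{9}\bigl(\sum_j(\partial_j\varphi)^4\bigr)^2>0$ whenever $\nabla\varphi\neq 0$, \emph{regardless} of the direction of $\nu$, so for $\beta$ large the domination of $(f')^7\{a,b\}$ is automatic and no anisotropy obstruction arises; second, once this is seen, the weight $\psi=e^{\beta\varphi}$ itself satisfies \eqref{new5} strictly, so you may simply apply Proposition~\ref{P2} to $\psi$ (its level sets agree with those of $\varphi$, so your geometry is unchanged) and nothing needs to be re-proved.
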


The Carleman inequality with the linear weight $\pm x_n$ is not sufficient to prove the UCP across a general hypersurface. We need to bend it by considering  quadratic weight functions of the form $\pm x_n + |x^\prime|^2\mp c^2$. Thus we prove the following estimate with convex weight.

\begin{lemma}[(Carleman inequality with quadratic weight)]
\label{L2}
Let $\Omega$ be any bounded open set in $\mathbb{R}^n$. Let $\varphi(x) = \pm x_n+|x^\prime|^2\mp c^2$ be the weight function.  Then the Carleman estimate
\begin{equation}
\label{carleman1}
h\|w\|_{L^2(\Omega)} \lesssim \ \|e^{\frac{\varphi}{h}}h^4\mathcal{L}_{A,q}(e^{-\frac{\varphi}{h}}w)\|_{L^2(\Omega)} 
\end{equation}
holds for all $w\in C_c^{\infty}(\Omega)$ and $0<h<1$ small enough.
\end{lemma}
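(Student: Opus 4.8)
The plan is to reduce Lemma \ref{L2} to Proposition \ref{P2} by verifying that the quadratic weight $\varphi(x)=\pm x_n+|x'|^2\mp c^2$ satisfies the sign condition \eqref{new5} on the characteristic set $\{a=b=0\}$. Since $\varphi$ differs from $\pm x_n$ by the function $|x'|^2$, which is independent of $x_n$, I would first record the partial derivatives: $\partial_{x_n}\varphi=\pm 1$, $\partial_{x_k}\varphi=2x_k$ for $k<n$, and the Hessian $\partial^2_{x_jx_k}\varphi=2\delta_{jk}$ for $j,k<n$ and $0$ otherwise. In particular the Hessian is positive semidefinite, and this is precisely the quantity that multiplies the (otherwise sign-indefinite) bracketed expression in \eqref{pb}. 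So the heart of the matter is to show that the coefficient of $\partial^2_{x_jx_k}\varphi$ in \eqref{pb}, restricted to $a=b=0$, is nonnegative when contracted against a positive semidefinite matrix.

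The key computation mirrors \eqref{new1}–\eqref{new2}: on the set $b=0$ one has, for each index $j$, either $\xi_j=0$ or $\xi_j^2=(\partial_{x_j}\varphi)^2$, and on $a=0$ the identity $\xi_j^4+(\partial_{x_j}\varphi)^4=6(\partial_{x_j}\varphi)^2\xi_j^2$ holds (summation convention). Using the diagonal structure of the Hessian here, the relevant contraction collapses to $\sum_j (\text{bracket})_{jj}\,\partial^2_{x_jx_j}\varphi$, and the same algebra that produced \eqref{new2} shows this equals a sum of squares times the (nonnegative) diagonal Hessian entries, hence $\{a,b\}(x,\xi)\ge 0$ on $\{a=b=0\}$. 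I would then observe that $\varphi$ is smooth on all of $\mathbb{R}^n$ with $\nabla\varphi\ne 0$ near any fixed bounded set (as $\partial_{x_n}\varphi=\pm1\ne0$ everywhere), so the hypotheses of Proposition \ref{P2} are met with $\widetilde\Omega$ a suitable bounded neighbourhood of $\overline\Omega$.

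With \eqref{new5} verified, Proposition \ref{P2} immediately yields
\[
h^2\|w\|^2_{H^1_{scl}(\Omega)} \lesssim \|e^{\frac{\varphi}{h}}h^4\mathcal{L}_{A,q}(e^{-\frac{\varphi}{h}}w)\|^2_{L^2(\Omega)},
\]
and dropping the $\|h\nabla w\|^2_{L^2}$ part of the semiclassical norm and taking square roots gives exactly \eqref{carleman1}, since $h^2\|w\|_{H^1_{scl}}^2\ge h^2\|w\|_{L^2}^2$ so that $h\|w\|_{L^2(\Omega)}\lesssim\|e^{\varphi/h}h^4\mathcal{L}_{A,q}(e^{-\varphi/h}w)\|_{L^2(\Omega)}$. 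I expect the only real obstacle to be bookkeeping in the Poisson bracket computation: making sure the summation convention is handled correctly when the Hessian is not a multiple of the identity but merely diagonal with a zero in the $nn$-slot, and confirming that no cross terms with $\partial^2_{x_jx_k}\varphi$, $j\ne k$, survive (they vanish because the Hessian is diagonal). Once that is pinned down, the reduction to Proposition \ref{P2} is essentially immediate, so I would keep the proof short and simply point to the analogue of the convexification computation \eqref{new1}–\eqref{new3}.
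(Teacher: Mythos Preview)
Your overall strategy is exactly the paper's: verify condition \eqref{new5} for the quadratic weight and then invoke Proposition~\ref{P2}. The final deduction of \eqref{carleman1} from \eqref{c-e} is correct.

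There is, however, a real slip in your verification of \eqref{new5}. The claim ``on the set $b=0$ one has, for each index $j$, either $\xi_j=0$ or $\xi_j^2=(\partial_{x_j}\varphi)^2$'' is false: $b(x,\xi)=\sum_j 4(\partial_{x_j}\varphi)\xi_j(\xi_j^2-(\partial_{x_j}\varphi)^2)=0$ is a \emph{single} scalar equation, not a componentwise one. Consequently the algebra of \eqref{new1}--\eqref{new2}, which uses the summed identities $a=0$ and $b=0$, does not transfer to the individual diagonal terms $\{\text{bracket}\}_{jj}$ as you suggest. So the proposed justification for nonnegativity does not stand as written.

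The fix is simpler than what you attempted, and it is what the paper does: since the Hessian is diagonal with $\partial^2_{x_jx_j}\varphi=2$ for $j<n$ and $0$ for $j=n$, plug $j=k$ into the bracketed expression in \eqref{pb} and collect terms. One gets, for each fixed $j$,
\[
16\xi_j^6+48\xi_j^4(\partial_{x_j}\varphi)^2+48\xi_j^2(\partial_{x_j}\varphi)^4+16(\partial_{x_j}\varphi)^6
=16\bigl(\xi_j^2+(\partial_{x_j}\varphi)^2\bigr)^3,
\]
so that
\[
\{a,b\}(x,\xi)=32\sum_{j=1}^{n-1}\bigl(\xi_j^2+4x_j^2\bigr)^3\ \ge\ 0
\]
\emph{everywhere}, not merely on $\{a=b=0\}$. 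No appeal to the characteristic set or to \eqref{new2} is needed. With this in hand, Proposition~\ref{P2} applies and your final paragraph goes through unchanged.
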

\noindent
Let us first see how we can derive the Proposition \ref{T} by assuming the Lemma \ref{L2}. 

\begin{proof}
[Proof of Proposition \ref{T}]
We first consider the case $x_0 =0$ and $S = \{x_n =0\}$. Assume that $V = B_{4\delta}$ for some small $\delta >0$ and we have that $u\in H^4(V)$ satisfies
%\begin{equation*}
\begin{align}
\mathcal{L}_{A,q} u = 0 \quad &\text{ in } \ V \label{3}\\
u = 0 \quad &\text{ in } \ V\cap \{x_n<0\}\notag.
\end{align}
%\end{equation*}
We will show that $u =0$ in $B_\varepsilon\cap \{x_n>0\}$ for some $\varepsilon >0$.

Let us consider the weight $\varphi_0 (x^\prime, x_n) = -x_n +|x^\prime|^2 + \delta^2$. The level set $\varphi_0^{-1}(0)$ is the parabola $x_n = |x^\prime|^2 + \delta ^2$. Now define the sets
\begin{align*}
W_+ &:= \{ \varphi_0(x)>0 \}\cap \{ x_n>0\}\\
W_- &:= \{-\delta^2< \varphi_0(x)<0 \} \cap \{x_n>0\} .
\end{align*}
It is clear that $W_+$ and $W_-$ are non-empty open sets and %if $2\delta^2 <1$ then both of them are subset of $B_{2\delta}(0)$. Furthermore,
$B_\varepsilon\cap \{x_n >0\}\subset W_+$ for $\varepsilon = \delta^2$.

\begin{figure}[h]
	\centering
	\begin{tikzpicture}
	\begin{axis}[
	axis lines = center,
	xmin=-5, xmax=5, ymin=-4, ymax=9,
	axis equal,
	xlabel = $x'$,
	ylabel = {$x_n$},
	yticklabels={,,}
	]
	\draw (axis cs: 0, 0) circle [radius=1] node at (1.44,-0.4) {$B_\epsilon$};
	\draw (axis cs: 0, 0) circle [radius=3.5] node at (-4.1,-0.4) {$B_{4\delta}$};
	\draw (0,1.2) parabola (5,7.5) node at (1, 2.1) {$W_{-}$};
	\draw (0,1.2) parabola (-5,7.5) node at (4.3, 1.2) {$W_{+}$};
	\draw (0,2.5) parabola (4,7.5) node at (-1.9, 7) {$\varphi_0=-\delta^2$};
	\draw (0,2.5) parabola (-4,7.5) node at (-5.8, 6) {$\varphi_0=0$};
	\end{axis}
	\end{tikzpicture}
\end{figure}
We rewrite the estimate (\ref{carleman1}) as,
\begin{equation}
\label{carest}
\|e^{\frac{\varphi_0}{h}}w\|_{L^2(\Omega)} \lesssim \ h^3 \|e^{\frac{\varphi_0}{h}}\mathcal{L}_{A,q}w\|_{L^2(\Omega)} 
\end{equation}
which holds for all $w\in H^4_0(\Omega)$ and for $0<h<1$ sufficiently small.
Now we choose $w=\chi u$ where $\chi(x) := \zeta \left( \frac{\varphi_0(x)}{\delta^2}\right)  \eta\left( \frac{|x|}{2\delta}\right) $ where $\zeta, \eta \in C_c^\infty(\R)$ satisfy
\begin{align*}
\zeta(t) =0 \ \text{ for } t\leq -1 \quad &\text{ and } \quad \zeta (t)=1 \ \text{ for } t\geq 0\\
\eta(t) = 1 \ \text{ for } |t|\le 1/2 \quad &\text{ and } \quad \eta(t)=0 \ \text{ for } |t|\ge 1.
\end{align*}
Since $u=0$ for $x_n<0$, it follows that $\textrm{supp} \ w\subset \overline{(W_-\cup W_+)\cap B_{2\delta}}$. Also $\textrm{supp}[ \mathcal{L}_{A,q}, \chi]u\subset \overline{W_-\cap B_{2\delta}}$ since $[ \mathcal{L}_{A,q}, \chi]u$ involves the derivatives of $\chi$ (i.e. $\partial^\alpha \chi$ where $\alpha$ is a multi-index) and they are zero on $\{|x|\le \delta\}\cup\{\varphi_0> 0\} $ as $\chi = 1 \text{ on } W_+$. 
Now by applying \eqref{carest} with this $w$, along with the fact that $\varphi_0\arrowvert_{W_+}> 0$ and $\varphi_0\arrowvert _{W_-}< 0$, we get
\begin{align*}
\|u\|_{L^2(W_+\cap B_{2\delta})} & \le \|e^{\frac{\varphi_0}{h}}u\|_{L^2(W_+\cap B_{2\delta})}\\
& \le \|e^{\frac{\varphi_0}{h}}\chi u\|_{L^2(B_{4\delta})} \\
& \le C h^{3}\|e^{\frac{\varphi_0}{h}} \mathcal{L}_{A,q} (\chi u)\|_{L^2(B_{4\delta})}\\
& \le C h^{3}\left( \|e^{\frac{\varphi_0}{h}} \chi \mathcal{L}_{A,q} u\|_{L^2(B_{4\delta})} + \|e^{\frac{\varphi_0}{h}}[ \mathcal{L}_{A,q}, \chi] u\|_{L^2(B_{4\delta})} \right) \\
& \le C h^{3} \|e^{\frac{\varphi_0}{h}} [ \mathcal{L}_{A,q}, \chi] u\|_{L^2(W_-\cap B_{2\delta})} \\
& \le C h^{3} \| [ \mathcal{L}_{A,q}, \chi] u\|_{L^2(W_-\cap B_{2\delta})}.
\end{align*}
In the above inequalities, we used the fact that $u$ is a solution of (\ref{3}) and the support conditions. Since $[\mathcal{L}_{A,q},\chi]u$ is a fixed function, letting $h\to 0$ shows that $\|u\|_{L^2(W_+)} =0$.
This proves the proposition in the special case $S = \{x_n = 0\}$.

Next we consider the case where $S$ is a general $C^\infty$ hypersurface. Normalizing, we may assume that $x_0 = 0$ and $S \cap V = \varphi^{-1}_0(0)\cap V$ where $\varphi_0 \in C^\infty (\mathbb{R}^n)$ satisfies $\nabla \varphi_0 \neq 0$ on $S\cap \overline{V}$. After a rotation and scaling, we may also assume $\nabla \varphi_0 (0) = \pm e_n$. We may further assume that $V = B_{4\delta}$ for some $\delta>0$ which can be chosen suitably small but fixed. Taylor approximation near the point $x_0=0$ gives that $\varphi_0(x) = \pm x_n + b(x)|x|^2$ where $|b(x)|\le C$ in $B_{4\delta}$. Thus $S$ looks approximately like $\{x_n =0\}$ in $V$ if $\delta$ is chosen small enough and the two sides of $S$ are given by $V_{\pm} = V \cap \{\pm\varphi_0>0\}$. After these normalizations, we set $\widetilde{\varphi}_0(x) = \varphi_0(x)+ \widetilde{C}\,|x^\prime|^2 \mp \delta^2$, where $\widetilde{C}>0$ will be chosen in order to have $\partial^2_{x_jx_k}\widetilde{\varphi}_0 \geq 0$. Then we can continue the argument given for the above case, replacing $\{\pm x_n >0\}$ by $\{\pm \widetilde{\varphi}_0(x)> 0\}$. This finishes the proof.
\hfill
\end{proof}

Now we prove the Lemma \ref{L2}. Let $\Omega$ be a bounded open set in $\mathbb{R}^n$ and $\widetilde{\Omega}\subset\mathbb{R}^n$ be an another open set such that $\Omega\Subset \widetilde{\Omega}$. Here in this case, our weight function is $\varphi = \pm x_n + |x^\prime|^2 \mp c^2$ near $\widetilde{\Omega}$. 
All we need to check whether the hypothesis \eqref{new5} is satisfied or not, i.e. whether $\{a,b\}\geq 0$ whenever $a=b=0$. Then Lemma \ref{L2} will follow from the Proposition \ref{P2}.  

We find
\begin{equation*}
\partial_{x_j}\varphi =
\begin{cases}
\quad 2x_j \ &\text{ if } \ j\ne n\\
\pm 1 \ &\text{ if } \ j=n
\end{cases}
\end{equation*}
and
\begin{equation*}
\partial^2_{x_jx_k}\varphi =
\begin{cases}
2\delta_{jk} \ &\text{ if } \ j\ne n\\
\ 0 \ &\text{ if } \ j=n.
\end{cases}
\end{equation*}
Correspondingly, the symbols $a(x,\xi)$, $b(x,\xi)$ becomes,
\begin{align*}
a(x,\xi) &= \sum_{j=1}^{n-1}\left( \xi_j^4 + 16 x_j^4 \right) + \left( \xi_n^4 + 1\right) -6\sum_{j=1}^{n-1}\left( 4x_j^2\xi_j^2 \right) +6 \xi_n^2 \\
b(x,\xi) &= 2\sum_{j=1}^{n-1} x_j\xi_j^3 \pm \xi_n^3 - 8\sum_{j=1}^{n-1}x_j^3\xi_j \pm \xi_n. 
\end{align*}
Next we calculate the Poisson bracket $\{a,b\}(x,\xi)$ (cf. \eqref{pb}) to find
\begin{align*} 
\{a,b\}(x,\xi)= & 32 \sum_{j=1}^{n-1} \left[ \xi_j^6 + 3 \xi_j^4 (2x_j)^2+ 3 \xi_j^2 (2x_j)^4+ (2x_j)^6\right] \\
= & 32 \sum_{j=1}^{n-1} \left( \xi_j^2 + 4x_j^2\right)^3\\
\geq&\, 0.
\end{align*}
This completes the discussion of the proof of UCP across the hypersurface.

\section{Weak UCP and UCP for Cauchy data}
\label{S3}
\setcounter{equation}{0}
%\noindent
In this section, we discuss about the proof of the weak UCP (Theorem \ref{T1}) and UCP for the Cauchy data (Theorem \ref{T2}). We first deduce the following proposition which is a special case of weak UCP, from the UCP across a hypersurface. Then
Theorem \ref{T1} follows using a connectedness argument.

\begin{proposition}[(Weak UCP for concentric balls)]
\label{P3}
Let $u\in H^4(B(x_0,R_0))$ satisfies
\begin{equation*}
\begin{aligned}
\mathcal{L}_{A,q}\,u &= 0 \quad \text{ in } B(x_0,R_0)\\
u &= 0 \quad \text{ in } B(x_0,r_0) \ \text{ for some } r_0<R_0.
\end{aligned}
\end{equation*}
Then $u=0$ in $B(x_0,R_0)$.
\end{proposition}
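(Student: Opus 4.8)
The plan is to deduce Proposition \ref{P3} from the UCP across a hypersurface (Proposition \ref{T}) by a continuity/connectedness argument on the family of concentric balls. Without loss of generality take $x_0 = 0$. Define
\[
\rho^* := \sup\{\, \rho \in [r_0, R_0] : u = 0 \text{ in } B(0,\rho) \,\}.
\]
The set $\{\rho : u = 0 \text{ in } B(0,\rho)\}$ is nonempty (it contains $r_0$) and clearly an interval containing $[r_0, \rho^*)$; since $u \in H^4 \subset C^0$ vanishes on each $B(0,\rho)$ with $\rho < \rho^*$, by continuity $u = 0$ on $B(0,\rho^*)$ as well. The goal is to show $\rho^* = R_0$; suppose for contradiction that $\rho^* < R_0$.

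The key step is a local improvement at the sphere $\{|x| = \rho^*\}$: I claim that for every point $x_1$ with $|x_1| = \rho^*$, the solution $u$ vanishes in a full neighbourhood of $x_1$. Indeed, fix such an $x_1$ and let $S$ be the hypersurface $\{|x| = \rho^*\}$, which is smooth near $x_1$; choose a small ball $V = B(x_1, r)$ with $r < R_0 - \rho^*$ so that $V \subset B(0,R_0)$ and $V = V_+ \cup S \cup V_-$ where $V_- := V \cap \{|x| < \rho^*\}$ is the inner side. Since $u = 0$ on $B(0,\rho^*)$, in particular $u = 0$ in $V_-$, and $\mathcal{L}_{A,q} u = 0$ in $V$; applying Proposition \ref{T} (with the roles of $V_+$ and $V_-$ interchanged, which is harmless since the statement is symmetric in the two sides) yields that $u = 0$ in some neighbourhood of $x_1$. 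Running this over all $x_1$ in the compact sphere $\{|x| = \rho^*\}$ and taking a finite subcover, we obtain $\varepsilon > 0$ with $u = 0$ on $B(0, \rho^* + \varepsilon) \setminus \overline{B(0,\rho^*)}$, and since $u$ also vanishes on $B(0,\rho^*)$ we conclude $u = 0$ on $B(0, \rho^* + \varepsilon)$ (using continuity of $u$ across the sphere, or simply that these two open sets together with the sphere cover the larger ball and $u$ is continuous). This contradicts the definition of $\rho^*$ as a supremum, so $\rho^* = R_0$ and hence $u = 0$ in $B(0,R_0)$.

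I expect the main (though modest) obstacle to be bookkeeping at the sphere: verifying that $S = \{|x| = \rho^*\}$ is genuinely a $C^\infty$ hypersurface with nonvanishing gradient near each $x_1$ (true since $\rho^* \geq r_0 > 0$), that the neighbourhood $V$ can be taken inside $B(0,R_0)$ so that the hypotheses of Proposition \ref{T} hold there, and that the "vanishing on one side plus the PDE" hypothesis is met with $V_-$ as the side on which $u \equiv 0$. One should also note that the output of Proposition \ref{T} is vanishing in \emph{some} neighbourhood of each $x_1$ whose size may depend on $x_1$; the compactness of the sphere is what upgrades this to a uniform collar $B(0,\rho^*+\varepsilon) \setminus \overline{B(0,\rho^*)}$. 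Everything else is a routine supremum argument.
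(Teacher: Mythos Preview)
Your argument is correct and follows essentially the same route as the paper: both proofs reduce to applying Proposition \ref{T} at each point of a sphere $\partial B(x_0,r)$ and then using compactness of that sphere to obtain a uniform collar, packaged either as a supremum/contradiction argument (yours) or as an open--closed--nonempty argument on the interval of radii (the paper's). One small remark: the embedding $H^4 \subset C^0$ you invoke holds only for $n\le 7$, but you do not actually need it---if $u=0$ in $B(0,\rho)$ for every $\rho<\rho^*$, then $u=0$ a.e.\ in $B(0,\rho^*)=\bigcup_{\rho<\rho^*}B(0,\rho)$ directly.
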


\begin{proof}
Let
\begin{equation*}
I:=\{r\in (0,R_0): u= 0 \text{ in } B(x_0,r)\}.
\end{equation*}
Be the hypothesis, $I$ is a non-empty set. Also it is closed since $u=0$ in $B(x_0,r_i)$ with $r_i\to r$ implies $u=0$ in $B(x_0,r)$. Now we show that $A$ is open as well. Therefore $I=(0,R_0)$ which shows that $u=0$ in $B(x_0,R_0)$, as claimed.

Suppose that $r_1\in I$. Let us consider the hypersurface $S= \partial B(x_0,r_1)$. Since $u=0$ on one side of the hypersurface, for every point $y\in S$, Proposition  \ref{T} says that $u=0$ in some open ball $B(y,\varepsilon_y)\subset B(x_0,R_0)$. Consider the open set
\begin{equation*}
U :=B(x_0,r_1)\cup \left( \underset{y\in S}{\cup} B(y,\varepsilon_y)\right) .
\end{equation*}
As the distance between the compact set $S$ and $\overline{B(x_0,R_0)}\setminus U$ is positive, there exists $\varepsilon>0$ such that $u$ vanishes on $B(x_0,r_1+\varepsilon)$. This implies $I$ is an open set which concludes the proof.
\hfill
\end{proof}

\begin{proof}[Proof of Theorem \ref{T1}]
Let us consider the following set
\begin{equation*}
A:= \{x\in \Omega: u=0 \text{ in } B(x,r) \text{ for some } r>0\}.
\end{equation*}
By the assumption of the theorem, $A$ is non-empty and most importantly it is an open set by its definition. We show that it is also closed as a subset of $\Omega$. Since $\Omega$ is a connected set, this yields that $A = \Omega$ which then completes the proof.

Suppose on the contrary, $A$ is not closed. Let $x$ be a limit point of $A$ such that $x\notin A$, i.e. $u$ does not vanish on $B(x,r)$ for any $r>0$. Let us fix $r$ such that $B(x,r)\subset \Omega$ and let $y\in B(x,r/2)\cap A$, therefore $u=0$ on $B(y,r_0)$ for some $r_0<r/2$. Then Proposition \ref{P3} gives that $u$ vanishes on the concentric ball $B(y,r)$. But this is a contradiction since $x\in B(y, r)$.
\hfill
\end{proof}

Finally we show the unique continuation if the Cauchy data vanishes on some part of the boundary. The proof is done by extending the domain little bit where the Cauchy data vanishes and then applying the weak UCP.

\begin{proof}[Proof of Theorem \ref{T2}]
Let $x_0\in \Gamma$. Since $\Omega$ has smooth boundary, we can assume, upon relabelling the coordinate axes, that  
\begin{equation}
\Omega \cap B(x_0,r) = \{ x\in B(x_0,r): x_n >g(x')\}
\end{equation}
for some $r>0$ and some $g:\R^{n-1}\to \R$ a $C^\infty$-function. Now we would like to extend the domain near $x_0$. Let $h\in C_c^\infty(\R^{n-1})$ be a function such that $h(x') =0$ if $|x'|\ge r/2 $ and $h(x') =1$ if $|x'|\le r/4 $. We define the set, for $\varepsilon>0$,
\begin{equation*}
\widetilde{\Omega}:=\Omega \cup \{x\in B(x_0,r): x_n >g(x') - \varepsilon h(x')\}.
\end{equation*}
If $\varepsilon$ is small enough, $\{x: |x'|\le r/2, x_n =g(x') - \varepsilon h(x')\} \subset B(x_0,r)$. Clearly $\widetilde{\Omega}$ is an open, bounded, connected set with smooth boundary. Also we define $u$ on the extended domain as
\begin{equation*}
\widetilde{u}(x) :=
\begin{cases}
u(x), \quad \text{ if } x\in\Omega\\
0, \quad \text{ if } x\in \widetilde{\Omega}\setminus \overline{\Omega}.
\end{cases}
\end{equation*}
Since $\widetilde{u}\arrowvert_\Omega \in H^4(\Omega)$ and $\widetilde{u}\arrowvert_{\widetilde{\Omega}\setminus \overline{\Omega}}\in H^4(\widetilde{\Omega}\setminus \overline{\Omega})$, we may conclude $\widetilde{u}\in H^4(\widetilde{\Omega})$ if the traces match at the interface from both sides. But from the hypothesis, $u= \partial_\nu u = \partial^2_\nu u = \partial^3_\nu u =0$ on $\Gamma$. Also note that by the construction, $\partial\widetilde{\Omega} \setminus \partial \Omega\subset \Gamma$. Therefore, we obtain $\widetilde{u}\in H^4(\widetilde{\Omega})$. Furthermore, extending $q$ by $0$ in $\widetilde{\Omega}\setminus \overline{\Omega}$, we get $\widetilde{q}\in L^\infty(\widetilde{\Omega})$. Similarly, consider $\widetilde{A}\in W^{1,\infty}(\widetilde{\Omega})$, an extension of $A$. Then it follows
\begin{equation*}
\mathcal{L}_{\widetilde{A}, \widetilde{q}}\widetilde{u} =0 \quad \text{a.e. in } \widetilde{\Omega}.
\end{equation*}
Now since $\widetilde{u} =0$ in $\widetilde{\Omega}\setminus \overline{\Omega}$, the weak UCP (Theorem \ref{T1}) yields that $\widetilde{u}$ vanishes on the whole domain $\widetilde{\Omega}$ . Hence, $\widetilde{u}\arrowvert_\Omega =u=0$ which proves the theorem.
\hfill
\end{proof}

\section{Stability estimate}
\label{S4}
\setcounter{equation}{0}
%\noindent
Here we apply the Carleman estimates to establish the corresponding stability estimate. 
In order to do so, some Caccioppoli-type interior estimate for the fourth order operator is also crucial which we prove below.

\begin{proposition}[(Caccioppoli inequality)]
Let $\mathcal{L}_{A,q}u =0$ in $B_1$. For fixed $r,\rho\in (0,1)$ with $r>\rho$, there exists a constant $C>0$ depending only on $\|A\|_{W^{1,\infty}(B_1)}$ and $\|q\|_{L^\infty(B_1)}$ such that
\begin{equation}
\label{caccioppoli}
\int\displaylimits_{B_r\setminus\overline{B}_{\varrho}}{(|D^2u|^2+|D^3u|^2)}\le \frac{C}{(r-\varrho)^2}\int\displaylimits_{B_{2r}\setminus\overline{B}_{\frac{\varrho}{2}}}{(|u|^2+|Du|^2)} .
\end{equation}
\end{proposition}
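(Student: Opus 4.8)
The plan is to prove \eqref{caccioppoli} by a localized energy (Caccioppoli) argument. The one structural input that must be flagged at the outset is that, although the principal part $\sum_j D_{x_j}^4$ is anisotropic and is \emph{not} a power of $-\Delta$, its symbol is still comparable to $|\xi|^4$: by Cauchy--Schwarz and the power-mean inequality one has $\sum_j\xi_j^4\ge n^{-1}|\xi|^4$ and $\sum_j|\xi_j|^6\ge n^{-2}|\xi|^6$, so for $w\in C_c^\infty$ (or for $\chi^N u$ with $\chi$ of compact support) one gets $\|D^2 w\|_{L^2}^2\le n\sum_j\|D_{x_j}^2 w\|_{L^2}^2=n\,\mathrm{Re}\,(\sum_j D_{x_j}^4 w\,|\,w)$ and similarly $\|D^3 w\|_{L^2}^2\lesssim \sum_j\|D_{x_j}^3 w\|_{L^2}^2$. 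Thus the equation for $\sum_j D_{x_j}^4$, rather than for $\Delta^2$, already controls all second and third derivatives. The first step is then the reduction: since $u\in H^4_{\mathrm{loc}}$ solves $\mathcal L_{A,q}u=0$, we have the pointwise identity $\sum_j D_{x_j}^4 u=-\sum_j A_jD_{x_j}u-qu=:F$ with $\|F\|_{L^2(U)}\le(\|A\|_{L^\infty}+\|q\|_{L^\infty})\|u\|_{H^1(U)}$ on any subdomain $U$, so it suffices to bound $\|D^2u\|^2+\|D^3u\|^2$ on $B_r\setminus\overline B_\rho$ by $C(r-\rho)^{-2}\|u\|_{H^1}^2$ on the larger annulus, absorbing the contribution of $F$ into the right-hand side.

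For the second derivatives, fix $\chi\in C_c^\infty(B_{2r}\setminus\overline B_{\rho/2})$ with $\chi\equiv1$ on $B_r\setminus\overline B_\rho$ and $|\partial^\alpha\chi|\lesssim(r-\rho)^{-|\alpha|}$ for $|\alpha|\le3$, and test the identity $\sum_j D_{x_j}^4 u=F$ against $\chi^{2N}\overline u$ for a large fixed integer $N$, integrating by parts twice in each $x_j$ via $\int D_{x_j}^4 u\,\overline v=\int D_{x_j}^2 u\,\overline{D_{x_j}^2 v}$. This produces the leading term $\sum_j\int\chi^{2N}|D_{x_j}^2u|^2$ (which by the symbol bound dominates $c\int\chi^{2N}|D^2u|^2$), plus commutator terms carrying at least one derivative on $\chi$ and at most two on $u$ (after a further integration by parts), plus the source term $\int F\,\overline{\chi^{2N}u}$. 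Every error term is treated by the weighted Young inequality: factors of $\chi^N D^2u$ are absorbed into the leading term with small $\varepsilon$, while the remainder is bounded by $C(r-\rho)^{-2}\int_{B_{2r}\setminus\overline B_{\rho/2}}(|u|^2+|Du|^2)$ together with $\|F\|_{L^2}^2\lesssim\|u\|_{H^1}^2$. This gives the $|D^2u|^2$ part of \eqref{caccioppoli}.

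For the third derivatives I would bootstrap. Differentiating the equation distributionally gives $\sum_j D_{x_j}^4(D_{x_k}u)=-D_{x_k}\bigl(\sum_j A_jD_{x_j}u+qu\bigr)$, with right-hand side in $H^{-1}$; testing against $\chi^{2N}\overline{D_{x_k}u}$ and integrating by parts so that no derivative ever lands on $q$ and at most one lands on $A$ (here $A\in W^{1,\infty}$ is used), the same scheme yields leading term $\sum_{j,k}\int\chi^{2N}|D_{x_j}^2D_{x_k}u|^2\gtrsim\int\chi^{2N}|D^3u|^2$, with every error term bounded by one of: $C(r-\rho)^{-2}\int(|u|^2+|Du|^2)$ on the larger annulus; $\varepsilon\int\chi^{2N}|D^3u|^2$ (absorbed); or $C\int\chi^{2N}|D^2u|^2$, which is re-estimated by the $D^2$-bound already obtained. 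Running the two steps across a fixed finite chain of intermediate radii between $\rho$ and $r$ gives \eqref{caccioppoli}.

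The main obstacle is the commutator bookkeeping in this anisotropic fourth-order, non-divergence-form setting: one must choose the cut-off power $N$ and the order of the integrations by parts and Young splittings so that (i) every term still containing $D^2u$ or $D^3u$ with a $\chi$-power $\ge N$ is absorbable into the leading positive term, and (ii) the residual lower-order terms come out with exactly the claimed power of $(r-\rho)^{-1}$ and no worse — in particular the merely Lipschitz coefficient $A$ must not be integrated by parts onto more than once, and the $L^\infty$ coefficient $q$ not at all. The symbol comparison $\sum_j\xi_j^4\gtrsim|\xi|^4$ (and its sixth-order analogue) is the other point that must be invoked explicitly, since it is what replaces the usual identity $\|D^2w\|^2=\|\Delta w\|^2$ available for the bi-Laplacian.
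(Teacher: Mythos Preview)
Your argument is correct and follows the same localized-energy scheme as the paper: multiply by a cut-off-weighted test function, extract a positive leading term, and absorb the commutators by Young's inequality. For the $D^2u$ bound the two proofs are identical (the paper uses $\widetilde\eta^4 u$ as test function, your $\chi^{2N}\overline u$). For the $D^3u$ bound there is only a cosmetic difference: the paper tests the undifferentiated equation directly against $\widetilde\eta^2 D_{x_j}^2 u$ and obtains $\sum_j\int\widetilde\eta^2|D_{x_j}^3u|^2$ as leading term, whereas you differentiate the equation in $x_k$ and test against $\chi^{2N}\overline{D_{x_k}u}$; after one integration by parts these are the same manoeuvre, and your version has the advantage of producing the mixed quantity $\sum_{j,k}|D_{x_j}^2D_{x_k}u|^2$ directly. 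The one point you make explicit that the paper leaves tacit is the Fourier-side comparison $\sum_j\xi_j^4\ge n^{-1}|\xi|^4$ (and its sixth-order analogue), which is indeed what is needed to upgrade control of the pure anisotropic derivatives $\sum_j|D_{x_j}^k u|^2$ to control of the full tensor $|D^k u|^2$; the paper's write-up records only the former. Your care about not integrating by parts onto $q\in L^\infty$ and at most once onto $A\in W^{1,\infty}$ is well placed, though in the paper's choice of test functions this issue happens not to arise.
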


\begin{proof}
We start with estimating the first term in the left hand side of (\ref{caccioppoli}). From the equation satisfied by $u$, we get, for any $\psi\in C_c^2(B_1)$,
\begin{equation}
\label{6}
\begin{aligned}
0 = \int\displaylimits_{B_1}{\mathcal{L}_{A,q}\,u \,\psi} =
\int\displaylimits_{B_1}{D_{x_j}^2 u\, D_{x_j}^2 \psi} + \int\displaylimits_{B_1} {A_j\, D_{x_j}u\, \psi} + \int\displaylimits_{B_1}{qu\, \psi}.
\end{aligned}
\end{equation}
Choose a cut-off function $\widetilde{\eta}\in C_c^\infty(B_1)$ which satisfies 
\begin{equation*}
\begin{aligned}
 0\le \widetilde{\eta} \le 1 \quad &\text{ in } \ B_1,\\
 \widetilde{\eta} =1 \quad &\text{ in } \ B_r\setminus \overline{B}_{\varrho},\\
\widetilde{\eta}=0 \quad &\text{ outside } \ B_{2r}\setminus\overline{B}_{\frac{\varrho}{2}},\\
\text{ and } \ |D^k\widetilde{\eta}| \le \frac{c}{(r-\varrho)^k}\quad &\text{ for } \ k=1,...,4.
\end{aligned}
\end{equation*}
Substituting the test function $\psi$ by $\widetilde{\eta}^4 u$ in (\ref{6}) yields,
\begin{equation*}
\begin{aligned}
0 & = \int\displaylimits_{B_1}{D_{x_j}^2 u\, D_{x_j}^2 (\widetilde{\eta}^4 u)} + \int\displaylimits_{B_1} {A_j D_{x_j}u\, (\widetilde{\eta}^4 u)} + \int\displaylimits_{B_1}{q\widetilde{\eta}^4 |u|^2}\\
& = \int\displaylimits_{B_1}{\widetilde{\eta}^4 |D_{x_j}^2 u|^2} +2\int\displaylimits_{B_1}{D_{x_j}^2u\, D_{x_j}\widetilde{\eta}^4 D_{x_j}u } +\int\displaylimits_{B_1}{u\, D_{x_j}^2 u\, D_{x_j}^2\widetilde{\eta}^4}  + \int\displaylimits_{B_1} {A_j\widetilde{\eta}^4 \, u\, D_{x_j}u} + \int\displaylimits_{B_1}{q\widetilde{\eta}^4 |u|^2}
\end{aligned}
\end{equation*}
which can be re-written as, employing Young's inequality,
\begin{equation*}
\begin{aligned}
& \qquad \int\displaylimits_{B_1}{\widetilde{\eta}^4 |D_{x_j}^2 u|^2}\\
& \le C \left( \, \int\displaylimits_{B_1}{\widetilde{\eta}^3 |D_{x_j}^2u| |D_{x_j}\widetilde{\eta}| |D_{x_j}u| } +\int\displaylimits_{B_1}{|u| |D_{x_j}^2 u| \left( \widetilde{\eta}^3|D_{x_j}^2\widetilde{\eta}|+ \widetilde{\eta}^2 |D_{x_j}\widetilde{\eta}|^2\right) }\right.\\
& \hspace{8cm}\left. + \int\displaylimits_{B_1} {\widetilde{\eta}^4 |u| |D_{x_j}u|}+ \int\displaylimits_{B_1}{\widetilde{\eta}^4 |u|^2}\right)\\
& \le \varepsilon\int\displaylimits_{B_1}{\widetilde{\eta}^4 |D_{x_j}^2u|^2}+ C_\varepsilon \int\displaylimits_{B_1} {|D_{x_j}\widetilde{\eta}|^2 |D_{x_j}u|^2} + C_\varepsilon \int\displaylimits_{B_1} {|D^2_{x_j}\widetilde{\eta}|^2 |u|^2} + C_\varepsilon \int\displaylimits_{B_1}{|D_{x_j}\widetilde{\eta}|^4 |u|^2}\\
& \hspace{7.5cm} + C \int\displaylimits_{B_1}{\widetilde{\eta}^2|u|^2} + C \int\displaylimits_{B_1} {\widetilde{\eta}^2|D_{x_j}u|^2}
\end{aligned}
\end{equation*}
where the above constant $C>0$ depends on $\|A\|_{W^{1,\infty}(B_1)}$, $\|q\|_{L^\infty(B_1)}$ only.
Next incorporating the properties of $\widetilde{\eta}$ and choosing $\varepsilon$ suitably to absorb the first term of the right hand side in the left hand side, we obtain,
%\begin{equation*}
\begin{align}
& \quad \int\displaylimits_{B_r\setminus\overline{B}_{\varrho}}{|D_{x_j}^2 u|^2}\nonumber\\
& \le \int\displaylimits_{B_1}{\widetilde{\eta}^4 |D_{x_j}^2 u|^2}\nonumber\\
& \le C \left( \, \int\displaylimits_{B_1} {|D_{x_j}\widetilde{\eta}|^2 |D_{x_j}u|^2} + \int\displaylimits_{B_1} {|D^2_{x_j}\widetilde{\eta}|^2 |u|^2} + \int\displaylimits_{B_1}{|D_{x_j}\widetilde{\eta}|^4 |u|^2} + \int\displaylimits_{B_1}{\widetilde{\eta}^2(|u|^2+|D_{x_j}u|^2)}\right)\nonumber\\
& \le \frac{C}{(r-\varrho)^2}\int\displaylimits_{B_{2r}\setminus\overline{B}_{\frac{\varrho}{2}}} {|D_{x_j}u|^2} + \frac{C}{(r-\varrho)^4}\int\displaylimits_{B_{2r}\setminus\overline{B}_{\frac{\varrho}{2}}} {|u|^2} + C \int\displaylimits_{B_{2r}\setminus\overline{B}_{\frac{\varrho}{2}}}{(|u|^2+|D_{x_j}u|^2)}. \label{7}
\end{align}
This completes the estimate involving the second order term $D^2u$.

Similarly to estimate the term $D^3u$ in terms of $Du$ and $u$, we repeat the above arguments with the test function $\widetilde{\eta}^2D^2_{x_j}u$,
\begin{equation*}
\begin{aligned}
0 & =-\int\displaylimits_{B_1}{D_{x_j}^3 u\, D_{x_j} (\widetilde{\eta}^2D^2_{x_j}u)} + \int\displaylimits_{B_1} {A_j\, D_{x_j}u\, (\widetilde{\eta}^2D^2_{x_j}u)} + \int\displaylimits_{B_1}{q\, u (\widetilde{\eta}^2D^2_{x_j}u)}\\
& =-\int\displaylimits_{B_1}{\widetilde{\eta}^2\, |D_{x_j}^3 u|^2} -\int\displaylimits_{B_1}{D_{x_j}^3 u\, D_{x_j} \widetilde{\eta}^2\, D^2_{x_j}u}+ \int\displaylimits_{B_1} {A_j \widetilde{\eta}^2\, D_{x_j}u\, D^2_{x_j}u} + \int\displaylimits_{B_1}{q \widetilde{\eta}^2\, u\, D^2_{x_j}u}
\end{aligned}
\end{equation*}
which implies
\begin{equation*}
\begin{aligned}
\int\displaylimits_{B_1}{\widetilde{\eta}^2\, |D_{x_j}^3 u|^2} &\le C \left( \, \int\displaylimits_{B_1}{\widetilde{\eta}|D_{x_j}^3 u| |D_{x_j} \widetilde{\eta}| |D^2_{x_j}u|}+ \int\displaylimits_{B_1} {\widetilde{\eta}^2 |D_{x_j}u| |D^2_{x_j}u|} + \int\displaylimits_{B_1}{\widetilde{\eta}^2|u| |D^2_{x_j}u|}\right)\\
& \le \varepsilon \int\displaylimits_{B_1}{\widetilde{\eta}^2\, |D_{x_j}^3 u|^2} + C_\varepsilon \int\displaylimits_{B_1}{|D_{x_j} \widetilde{\eta}|^2 |D^2_{x_j}u|^2}+ C \int\displaylimits_{B_1}{\widetilde{\eta}^2 \left( |u|^2 + |D_{x_j}u|^2 + |D^2_{x_j}u|^2\right) } .
\end{aligned}
\end{equation*}
Therefore,
%\begin{equation}
\begin{align}
\int\displaylimits_{B_r\setminus\overline{B}_{\varrho}}{|D_{x_j}^3 u|^2} &\le \frac{C}{(r-\varrho)^2}\int\displaylimits_{B_{2r}\setminus\overline{B}_{\frac{\varrho}{2}}}{|D^2_{x_j}u|^2}+ C \int\displaylimits_{B_{2r}\setminus\overline{B}_{\frac{\varrho}{2}}}{\left( |u|^2 + |D_{x_j}u|^2 + |D^2_{x_j}u|^2\right) }\nonumber\\
&\le \frac{C}{(r-\varrho)^2} \left( \int\displaylimits_{B_{4r}\setminus\overline{B}_{\frac{\varrho}{4}}} {(|D_{x_j}u|^2+|u|^2)} \right) \label{8}
\end{align}
where the above constant $C>0$ depends on $\|A\|_{W^{1,\infty}(B_1)}$, $\|q\|_{L^\infty(B_1)}$ only. Thus, (\ref{7}) together with (\ref{8}) completes the proof. 
\hfill
\end{proof}

Next we establish the stability estimate. For this, it is more interesting to work with boundary value problems.
Recall that for any $\varphi$ smooth function,
\begin{equation*}
\Omega_\delta := \Omega \cap \{\varphi >\delta\} \quad \text{ and } \quad \partial\Omega_\delta := \partial\Omega\cap \{\varphi >\delta\}.
\end{equation*}

\begin{proof}[Proof of Theorem \ref{T4}]
We use here the analogue of the Carleman estimate (\ref{c-e}) for boundary value problems. By lifting the trace operator, there exists $\upsilon\in H^4(\Omega)$ satisfying
\begin{equation*}
\partial^k_{\nu} \upsilon = g^k \quad \text{ on } \Gamma, \ k=0,...,3
\end{equation*}
with
\begin{equation}
\label{13}
\|\upsilon\|_{H^4(\Omega)}\le C \sum_{k=0}^{3}\|g^k\|_{H^{\frac{7}{2}-k}(\Gamma)}
\end{equation}
for some constant $C>0$ depending only on $\Omega$ and $\Gamma$. Setting $u^* = u-\upsilon$, $u^*$ satisfies the following Cauchy problem
\begin{equation*}
\begin{cases}
\mathcal{L}_{A,q}\, u^* &= f + \mathcal{L}_{A,q}\, \upsilon \quad \text{ in } \ \Omega,\\
\partial^k_{\nu} u^* &= 0 \qquad \qquad \ \ \text{ on } \ \Gamma, \quad k=0,...,3.
\end{cases}
\end{equation*}
Now the Carleman estimate says that (cf. Proposition \ref{P2}) there exists $C>0$, depending on only $\|A\|_{W^{1,\infty}(\Omega)}$, $\|q\|_{L^\infty(\Omega)}$, $\Omega$, $n$,
such that, for all $w\in C_c^\infty(\Omega)$ and $0<h<1$ small enough,
\begin{equation}
\label{12}
\int\displaylimits_{\Omega}{\left( |w|^2 + h^2 |\nabla w|^2\right) e^{\frac{2\, \varphi}{h}}} \leq C h^6  \int\displaylimits_{\Omega}{|\mathcal{L}_{A,q}w|^2 e^{\frac{2\, \varphi}{h}}}
\end{equation}
where $\varphi$ is any Carleman weight. Let us introduce a cut-off function $\eta\in C^\infty_c(\Omega)$ such that $0\le \eta \le 1$ in $\Omega$, $\eta = 1$ in $\Omega_{\delta/2}$ and $\eta =0$ outside $\Omega_0$.
Since $\eta u^*\in H^4_0(\Omega)$, we may apply the Carleman estimate (\ref{12}) with $w= \eta u^*$ to obtain
\begin{equation*}
\int\displaylimits_{\Omega_0}{\left( |\eta u^*|^2 + h^2 |\nabla (\eta u^*)|^2\right) e^{\frac{2\, \varphi}{h}}} \leq C h^6  \int\displaylimits_{\Omega_0}{|\mathcal{L}_{A,q}(\eta u^*)|^2 e^{\frac{2\, \varphi}{h}}}.
\end{equation*}
Since $\eta =1$ in $\Omega_{\delta/2}$, we can further bound the left hand side from below as,
\begin{equation*}
\int\displaylimits_{\Omega_{\delta/2}}{\left( | u^*|^2 + h^2 |\nabla u^*|^2\right) e^{\frac{2\, \varphi}{h}}} \leq C h^6  \int\displaylimits_{\Omega_0}{|\mathcal{L}_{A,q}(\eta u^*)|^2 e^{\frac{2\, \varphi}{h}}}.
\end{equation*}
Now we calculate the right hand side,
\begin{equation*}
\mathcal{L}_{A,q}(\eta u^*) = \eta \mathcal{L}_{A,q}\,u^* + [\mathcal{L}_{A,q}, \eta]u^* = \eta \left( f + \mathcal{L}_{A,q}\, \upsilon\right)  + [\mathcal{L}_{A,q}, \eta]u^* 
\end{equation*}
where
\begin{equation*}
[\mathcal{L}_{A,q}, \eta]u^* = u\, D_{x_j}^4 \eta + 4 D_{x_j}^3\eta\, D_{x_j}u^* + 6 D_{x_j}^2 \eta\, D_{x_j}^2 u^* + 4 D_{x_j}\eta\, D_{x_j}^3 u^*.
\end{equation*}
Also,
\begin{equation*}
|[\mathcal{L}_{A,q}, \eta]u^*| \le C \left( |u^*| + |Du^*| + | D^2u^*| + |D^3u^*|\right)
\end{equation*}
for some constant $C>0$ which depends only on $\delta$.
Therefore, taking into account that $[\mathcal{L}_{A,q}, \eta]u^*$ consists of the derivatives of $\eta$, thus $\textrm{supp} [\mathcal{L}_{A,q}, \eta]u^*\subset \Omega_0\setminus \overline{\Omega}_{\delta/2} $, the Carleman estimate becomes, 
\begin{equation*}
\begin{aligned}
& \qquad \int\displaylimits_{\Omega_{\delta/2}}{\left( |u^*|^2 + h^2 |\nabla u^*|^2\right) e^{\frac{2\, \varphi}{h}}}\\
& \le C\, h^6 \int\displaylimits_{\Omega_0 } {\left( |f|^2 + |\mathcal{L}_{A,q}\, \upsilon|^2\right) e^{\frac{2\, \varphi}{h}}} +C\, h^6 \int\displaylimits_{\Omega_0\setminus \overline{\Omega}_{\delta/2} }{\left( |u^*|^2 + |Du^*|^2 + |D^2u^*|^2 + |D^3u^*|^2\right)e^{\frac{2\, \varphi}{h}}}
\end{aligned}
\end{equation*}
which reduces to, using the fact that $0<h<1$ and denoting by $\Phi := \underset{\overline{\Omega}}{\sup}\, \varphi$ and using $\Omega_{\delta}\subset \Omega_{\delta/2}$,
\begin{equation*}
\begin{aligned}
& \quad \int\displaylimits_{\Omega_{\delta/2}}{\left( |u^*|^2 + |\nabla u^*|^2\right) e^{\frac{2\, \varphi}{h}}}\\
&\leq C e^{\frac{2}{h}\Phi} \int\displaylimits_{\Omega_0}{\left( |f|^2 + |\mathcal{L}_{A,q}\upsilon|^2\right) } + C e^{\frac{\delta}{h}} \int\displaylimits_{\Omega_0 \setminus \overline{\Omega}_{\delta/2}} {\left( |u^*|^2 + |Du^*|^2 + |D^2u^*|^2 + |D^3u^*|^2\right)}.
\end{aligned}
\end{equation*}
Further plugging the Caccioppoli estimate (\ref{caccioppoli}) in the right hand side and replacing the left hand side on smaller domain, we get,
\begin{equation*}
\begin{aligned}
e^{\frac{2}{h}\delta}\int\displaylimits_{\Omega_{\delta}}{\left( |u^*|^2 + |\nabla u^*|^2\right) }&\leq \int\displaylimits_{\Omega_{\delta/2}}{\left( |u^*|^2 + |\nabla u^*|^2\right) e^{\frac{2\, \varphi}{h}}}\\
&\leq C e^{\frac{2}{h}\Phi} \int\displaylimits_{\Omega_0}{\left( |f|^2 + |\mathcal{L}_{A,q}\upsilon|^2\right) } + C e^{\frac{\delta}{h}} \int\displaylimits_{\Omega_0 \setminus \overline{\Omega}_{\delta/2}} {\left( |u^*|^2 + |\nabla u^*|^2 \right)}
\end{aligned}
\end{equation*}
for any $0<h<1$ suitably small, say for $h<h_1$.
Also the above constant $C>0$ depends only on $\Omega, \|A\|_{W^{1,\infty}(\Omega)}$, $\|q\|_{L^\infty(\Omega)}, n$ and $\delta$. Simplifying the above estimate, along with the estimate (\ref{13}), we get
\begin{align}
\|u^*\|_{H^1(\Omega_\delta)} & \le C \left( e^{\frac{1}{h}(\Phi - \delta)} F + e^{-\frac{\delta}{2h}} \|u^*\|_{H^1(\Omega_0)}\right)\nonumber\\
& \le C \left( e^{\frac{1}{h}(\Phi - \delta)} F + e^{-\frac{\delta}{2h}} \|u\|_{H^1(\Omega_0)}+ e^{-\frac{\delta}{2h}} \|\upsilon\|_{H^1(\Omega_0)}\right)\nonumber\\
& \le C \left( e^{\frac{1}{h}(\Phi - \delta)} F + e^{-\frac{\delta}{2h}} M + F\right). \label{17}
\end{align}
%\end{equation}

Now if $M<F$, then trivially we can write,
\begin{equation*}
M =M^{1-\theta} M^\theta \le M^{1-\theta} F^\theta \quad \text{ for any } \theta\in (0,1)
\end{equation*}
which implies
\begin{equation*}
\|u\|_{H^1(\Omega_\delta)} \le \|u\|_{H^1(\Omega_0)} = M \le M^{1-\theta} F^\theta.
\end{equation*}

If $M\ge F$, we choose
\begin{equation}
\label{18}
\frac{1}{h_0}:= \frac{1}{\Phi - \frac{\delta}{2}} \ln \left( \frac{M}{F}\right) \qquad \text{i.e.} \qquad e^{\frac{1}{h_0}(\Phi - \delta)} F = e^{-\frac{\delta}{2h_0}} M.
\end{equation}
Assume that $\Phi>\delta$ (otherwise the estimate (\ref{SE}) holds trivially being $\Omega_{\delta} = \emptyset$), hence $h_0>0$.
Further we consider two cases:
 
\textbf{(i)} Let $h_0\le h_1$. Then we choose $h=h_0$ in (\ref{17}) to get, with the help of (\ref{18}),
\begin{equation*}
\|u^*\|_{H^1(\Omega_{\delta})}\le 2C e^{-\frac{\delta}{2h_0}}M + C F .
\end{equation*}
But
\begin{equation*}
e^{-\frac{\delta}{2h_0}} = \left( \frac{F}{M}\right) ^\theta \quad \text{ where } \ \theta = \frac{\delta}{2\Phi - \delta}
\end{equation*}
which implies,
\begin{equation*}
\|u^*\|_{H^1(\Omega_{\delta})}\le C M^{1-\theta}F^\theta + CF.
\end{equation*}
Note that $0<\theta <1$ as well. This finally gives, plugging in $u^* = u-\upsilon$ and the estimate (\ref{13}),
\begin{equation*}
\|u\|_{H^1(\Omega_{\delta})}\le C (F+M^{1-\theta}F^\theta).
\end{equation*}

\textbf{(ii)} Let $h_0 > h_1$. From (\ref{18}), it follows,
%\begin{equation*}
$M \le e^{\frac{1}{h_1}(\Phi - \frac{\delta}{2})} F$
%\end{equation*}
which yields
\begin{equation*}
\|u\|_{H^1(\Omega_{\delta})} \le M = M^{1-\theta}M^\theta \le e^{\frac{\delta}{2h_1}} M^{1-\theta} F^\theta.
\end{equation*}
%Therefore, as before,
This completes the proof.
\hfill\end{proof}

%\bibliographystyle{alpha}
%\bibliography{bib_frac}

%\paragraph{}
%\vspace{20mm}
%\noindent
%Amrita Ghosh\\
%Institute of Mathematics, CAS,\\
%Czech Republic.\\
%\email{ghosh@math.cas.cz}\\
%\\
%Tuhin Ghosh\\
%Institute for Advanced Study,\\
%The Hong Kong University of Science and Technology,\\
%Hong Kong.\\
%\email{iasghosh@ust.hk}
\end{document}